\newcommand{\algorithmicinput}{\textbf{Input:}}
\newcommand{\algorithmicoutput}{\textbf{Output:}}
\newcommand{\INPUT}{\item[\algorithmicinput]}
\newcommand{\OUTPUT}{\item[\algorithmicoutput]}
\newcommand{\R}{\mathbb{R}}
\newcommand{\C}{\mathbb{C}}
\newcommand{\Ss}{\mathbf{S}}
\newcommand{\Scal}{{\cal S}}
\def\bfS{\mathbf{S}}
\def\calJ{\mathcal{J}}
\def\bbR{\mathbb{R}}
\def\rrank{\mathop{\mathrm{rank}}}
\def\mvec{\mathop{\mathrm{vec}}}
\def\bmx{\begin{bmatrix}}
\def\emx{\end{bmatrix}}
\newcommand{\argmin}[1]{\underset{#1}{\operatorname{arg}\,\operatorname{min}}\;}
\renewcommand{\vec}[1]{\textnormal{vec}(#1)}
\newcommand{\rank}[1]{\textnormal{rank}(#1)}
\newcommand{\image}[1]{\textnormal{image}(#1)}
\newtheorem{theorem}{Theorem}
\newtheorem{lemma}{Lemma}
\newtheorem{remark}{Remark}
\newtheorem{conjecture}{Conjecture}
\newtheorem{proposition}{Proposition}
\title{\vspace*{-1cm}Factorization approach to structured low-rank approximation with applications\thanks{
The research leading to these results has received funding from the European Research Council under the European Union's Seventh Framework Programme (FP7/2007-2013) / ERC Grant agreement number 258581 ``Structured low-rank approximation: Theory, algorithms, and applications'', the Research Foundation Flanders (FWO-Vlaanderen), the Flemish Government (Methusalem Fund, METH1), and the Belgian Federal Government (Interuniversity Attraction Poles programme VII, Dynamical Systems, Control, and Optimization). MI is an FWO Pegasus Marie Curie Fellow.}}
\author{Mariya Ishteva\footnotemark[2]
\and Konstantin Usevich\footnotemark[2]
\and Ivan Markovsky\thanks{
Dept. ELEC,  Vrije Universiteit Brussel,
	Building K, Pleinlaan 2, B-1050 Brussels, Belgium
	({\tt \{mariya.ishteva, konstantin.usevich, ivan.markovsky\}@vub.ac.be}).} 
}
\date{}
\begin{document}

\maketitle

\begin{abstract}
We consider the problem of approximating an affinely structured matrix, for example a Hankel matrix, by a low-rank matrix with the same structure. This problem occurs in system identification, signal processing and computer algebra, among others. We impose the low-rank by modeling the approximation as a product of two factors with reduced dimension. The structure of the low-rank model is enforced by introducing a penalty term in the objective function. The proposed local optimization algorithm is able to solve the weighted structured low-rank approximation problem, as well as to deal with the cases of missing or fixed elements. In contrast to approaches based on kernel representations (in linear algebraic sense), the proposed algorithm is designed to address the case of small targeted rank. We compare it to existing approaches on numerical examples of system identification, approximate greatest common divisor problem, and symmetric tensor decomposition and demonstrate its consistently good performance.
\end{abstract}

\noindent{\bf Key words.}
low-rank approximation, affine structure, penalty method, missing data, system identification, approximate greatest common divisor, symmetric tensor decomposition\\

\noindent{\bf AMS subject classifications.}
15A23, 
15A83, 
65F99, 
93B30, 
37M10, 
37N30, 
11A05, 
15A69 

\pagestyle{myheadings}
\thispagestyle{plain}
\markboth{M. ISHTEVA AND K. USEVICH AND I. MARKOVSKY}{STRUCTURED LOW-RANK APPROXIMATION BY FACTORIZATION}

\section{Introduction}
\label{sec:introduction}
Low-rank approximations are widely used in data mining, machine learning, and signal processing, as a tool for dimensionality reduction, feature extraction, and classification. In system identification, signal processing, and computer algebra, in addition to having low rank, the matrices are often structured, e.g., they have (block) Hankel, (block) Toeplitz, or (block) Sylvester structure. 
Motivated by this fact, in this paper, we consider the problem of approximating a given structured matrix $D$ by a matrix $\widehat{D}$ with the same structure and with a pre-specified reduced rank $r$.

\subsection{Problem formulation}
Formally, we consider the following problem
\begin{equation}
	\min_{\hat D} \|D - \hat D\|^2_{W}\,\quad\mbox{subject to }\quad 
	\rank{\hat D} \le r \mbox{ and } \hat D \mbox{ is structured,}
\label{slra_mat}
\end{equation}
where 
$D \in \mathbb{R}^{m \times n}$, $\widehat{D}\in \mathbb{R}^{m \times n}$, $r < \min(m,n)$ and 
$\|\cdot\|_{W}$ is a semi-norm on the space of matrices $\mathbb{R}^{m\times n}$, induced by a positive semidefinite matrix $W \in \mathbb{R}^{mn \times mn}$ as 
$$\|D\|^2_{W} := (\vec{D})^\top W \vec{D}.$$

Being able to deal with weights in \eqref{slra_mat} 
has a number of advantages in practice. 
First,  
due to sensor failure, malfunctioning of a communication channel, or simply due to unseen events, real-world data can have unknown (missing) elements. If repeating the experiments until all data are collected is not an option, for example because of high price of the experiments or high computational time, the missing data have to be approximated as well. The problem of estimating missing data is also known as the matrix completion problem and is well-studied in the case of unstructured matrices. In the case of structured matrices, however, this problem has few solutions \cite{slra-ext}. 
A natural way to deal with missing elements is to introduce zeros in the weight matrix 
at the positions corresponding to the missing elements. 

In addition, if prior knowledge is available about the importance or the correctness of each (noisy) element,
this knowledge can be encoded in 
the weight matrix.
Note also that finding the closest structured matrix to a given structured matrix with respect to the Frobenius norm can be encoded with $W$ being the identity matrix. 

The structures considered in \eqref{slra_mat} are affine structures. This class of structures includes many structures of interest and contains all linear structures. Moreover, it allows us to deal with fixed elements, i.e., to keep some elements of $D$ in the approximating matrix $\widehat{D}$. Unlike other approaches in the literature, we do not use infinite weights, but rather incorporate the fixed values in a special matrix.

\subsection{Solution approaches}
Existing algorithms to solve problem \eqref{slra_mat} can be classified into three groups:
i) based on local optimization, ii) using relaxations, or iii) using heuristics, such as the widely used Cadzow method \cite{CadWil90} or \cite{ZacSunEtal12}. Relaxation methods include subspace-based methods \cite{subspace-book,kung78} and, more recently, nuclear norm based methods \cite{LV09,LHV12,fazel2012hankel}. Local optimization algorithms use kernel or input/output (I/O) (also known as the structured total least squares (STLS) problem) representations of the rank constraint, as described in Table~\ref{tab:loc_opt_approaches}. 
Some of these algorithms cannot deal with fixed or missing elements or solve only special cases of problem \eqref{slra_mat}, e.g. the case of Frobenius norm.
\begin{table}[htb]%
\centering
\caption{Existing local optimization approaches for the structured low-rank approximation problem}
\begin{tabular}{l|l|l|l}
	Representation & Summary & Parameters & References\\[1mm]
	\hline\vspace*{1mm}
	Kernel & $R\widehat{D} = \mathbf{0}$ & \hspace*{-2mm} $R\in\R^{(m-r)\times m}$ & e.g., \cite{M07,lra-book,slra-efficient,borsdorf2012structured}\\[2mm]
	I/O& $\begin{bmatrix}X& I\end{bmatrix}\widehat{D} = \mathbf{0}$ & \hspace*{-2mm} $X\in\R^{(m-r)\times r}$ & e.g., \cite{slra-demoor,stls-block,park1999low,schuermans2004structured,mastronardi2000fast}\\[2mm]
	Image & $\widehat{D} = PL$ & \hspace*{-2mm} $P\in\R^{m\times r}\!$, $L\in\R^{r\times n}$&  \cite{chu2003structured}
\end{tabular}
\label{tab:loc_opt_approaches}
\end{table}

In this paper, we study an underrepresented point of view, namely the image representation of the rank constraint (also known as matrix factorization approach):
$$ \rank{\widehat{D}} \le r 
\iff \widehat{D} = PL\mbox{ for some }P\in \mathbb{R}^{m\times r},\, L\in \mathbb{R}^{r\times n}.$$
Although this view is widely represented in the literature on (unstructured) low-rank approximation, it is underrepresented in the case of structured low-rank approximation.
Imposing both low-rank and structure simultaneously with the image representation is a nontrivial problem \cite{chu2003structured}. 
The main difficulty comes from the fact that the structure has to be imposed on the approximation $\widehat D$ via the product of its factors $P$ and $L$.

\subsection{Our contribution}
We propose to resolve the problem of imposing the structure via the factors by using the penalty method \cite[\S 17.1]{NocWri06}. The structure can be imposed by introducing a penalty term in the cost function in \eqref{slra_mat}, representing the distance between the current iterate $PL$ and the space of structured matrices, i.e.,
\begin{equation*}
		\min_{P,\,L} \|D-PL\|^2_W +\lambda\,\mbox{dist}(PL, \mbox{its closest structured matrix}).
		\label{def:}
\end{equation*}
In this way the constrained optimization problem \eqref{slra_mat} becomes an unconstrained problem, which can be solved by an alternating projections (block coordinate descent) algorithm. 
To the best of our knowledge, this is the first detailed study of the matrix factorization view of structured low-rank approximation.
We apply the proposed algorithm on practically relevant and nontrivial simulation examples from system identification, computer algebra (finding a common divisor of polynomials with noisy coefficients), and symmetric tensor decomposition, and demonstrate its consistently good performance.
Large scale problems are not studied in the paper.

The main competitors of the proposed local optimization approach are the kernel-based algorithms, which aim at solving the same problem.
In contrast to kernel-based approaches which are meant for large rank $r$ (small rank reduction $m-r$), the proposed approach is more efficient for problems with small $r$. Moreover, for general affine structures, existing kernel approaches have restrictions on the possible values of the reduced rank $r$ \cite{stls-block}. With the new approach we can overcome this limitation.

Local optimization techniques need a good starting value in order to converge to an adequate local optimum. We solve a series of related subproblems with increasing penalty parameter $\lambda$. Due to the use of a small initial penalty parameter, the truncated SVD provides a good initial approximation for the initial subproblem. Each subsequent subproblem is initialized with the solution of the previous one, providing a good initialization for every subproblem.

Another known issue with the structured low-rank approximation problem is the possible non-existence of solution with fixed rank \cite{chu2003structured}. The proposed approach avoids this issue by requiring that the rank of the approximation is bounded from above by $r$. This way the feasible set is closed. Then if the fixed elements are all zeros (which is the case for most common structures), the feasible set is nonempty and solution always exists. We note, however, that in the non-generic case when 
the solution of \eqref{slra_mat} is of lower rank, the proposed algorithm has to be modified for the convergence results to hold.

Another advantage of the proposed algorithm is its simplicity. As we show in Section~\ref{sec:alg}, the proposed algorithm reduces to solving a sequence of least squares problems with closed form solutions. This makes it more robust to implementation issues, unlike the algorithm proposed in \cite{chu2003structured}.

Last but not least, we are able to solve the weighted structured low-rank approximation problem, as well as to deal with the cases of missing elements in the data matrix or fixed elements in the structure. These ``features'' have great impact on the applicability of the proposed approach, as demonstrated in the numerical section.

The rest of the paper is organized as follows. In Section~\ref{sec:structure}, we discuss the structure specification and how to obtain the closest structured matrix to a given unstructured matrix (orthogonal projection on the space of structured matrices). In Section~\ref{sec:optimization_problem}, 
we discuss the structure parameter point of view of the main optimization problem.
Our reformulations using penalty terms are proposed in Section~\ref{sec:regularized_formulation}. The main algorithm and its properties are discussed in Section~\ref{sec:algorithm}. In Section~\ref{sec:examples}, it is compared with existing approaches on numerical examples. In Section~\ref{sec:conclusions}, we draw our final conclusions.

\section{Structure specification and its use}
\label{sec:structure}
Commonly used structures include Hankel, block Hankel (system identification,  speech encoding, filter design), Toeplitz, block Toeplitz (signal processing and image
enhancement), Sylvester, extended Sylvester (computer algebra), and banded matrices with fixed bandwidth, see \cite[Table 1.1]{lra-book}, \cite[\S 2]{chu2003structured} and the references therein.
These matrices have a pattern for the position of their elements. For example, in a Hankel matrix $D\in\R^{m\times n}$, the elements along any anti-diagonal are the same, i.e.,
$$D= {\cal H}_m(p) = \begin{bmatrix}
	p_1 & p_2 & p_3 & \ldots & p_{n}\\
	p_2 & p_3 &  & \iddots & \\
	p_3 & \iddots &  & & \vdots\\
	\vdots & \iddots & \iddots &  & \\
	p_m & p_{m+1} & p_{m+2} &\ldots & p_{n_p}
\end{bmatrix}$$
for some vector $p\in\R^{n_p}$, $n_p = m+n-1$, called {\em structure parameter vector} for the matrix $D$.
Note that any $m\times n$ (unstructured) matrix can be considered as structured matrix with $n_p =mn$ structure parameters.

In this section, we first formally introduce the affine structures and then discuss the orthogonal projection on the space of structured matrices.

\subsection{Affine structures}
Formally, affine matrix structures are defined as
\begin{equation}
	\Scal(p) = S_0 + \sum_{k=1}^{n_p}S_kp_k,
\label{def:structure}
\end{equation}
where $S_0,S_1,\ldots, S_{n_p} \in\R^{m\times n}$, $p\in\R^{n_p}$ and 
$n_p\in\mathbb{N}$ is the number of structure parameters. We require $n_p$ to be minimal in the sense that
$$\image{\Scal}:=\{\Scal(p)\,|\, p\in\R^{n_p}\}$$
cannot be represented with less than $n_p$ parameters.
It is convenient to define the following matrix
\begin{equation}
	\Ss=\begin{bmatrix}\vec{S_1} & \,\cdots\, & \vec{S_{n_p}}\end{bmatrix}\in\R^{mn\times n_p},
\label{eq:bfs}
\end{equation}
where $\vec{X}$ denotes the vectorized matrix $X$.
The minimality of ${n_p}$ is equivalent to $\Ss$ having full column rank.
For simplicity, we assume that 
\begin{itemize}
	\item[{\bf (A)}] the matrix $\Ss$ consists of only zeros and ones and that there is at most one nonzero element in each row of the matrix $\begin{bmatrix}\vec{S_0} & \,\Ss\end{bmatrix}$, 
i.e., every element of the structured matrix corresponds to (at most) one element of $p$ or is a fixed element.
\end{itemize}
This assumption is satisfied for the common structures mentioned earlier ((block) Hankel, (block) Toeplitz, etc.)
and
implies that $n_p\leq mn$ and $\Ss^\top\,\vec{S_0} =\mathbf{0}.$

The matrix $S_0$ is introduced to handle fixed elements and is independent of the values of the structure parameters. The parameter vector $p$ is a vector of true parameters and does not include elements corresponding to fixed elements in the structure specification. In many cases (for example, Sylvester matrix in Section \ref{ex:gcd}), the fixed elements are zeros, and therefore $S_0 = \mathbf{0}$ (the structure is linear). However, we aim at dealing with the more general case of the arbitrary fixed elements.

\subsection{Orthogonal projection on image($\Scal$)}
\label{sec:orthogonal_projections}
Next we discuss the orthogonal projection of a matrix on to
the space of structured matrices $\image{\Scal}$. This projection is used in the optimization algorithm of Section~\ref{sec:algorithm}. 

\begin{lemma}
\label{lem:projection}
	For a structure $\Scal$ satisfying assumption {\bf (A)}, the orthogonal projection ${\cal P}_{\Scal}(X)$ of a matrix $X$ on $\image{\Scal}$ is given by 
	\begin{equation}
		{\cal P}_{\Scal}(X):=\Scal(\Ss^\dagger \,\textnormal{vec}\,(X)),\quad  where\quad \Ss^\dagger := (\Ss^\top\,\Ss)^{-1}\Ss^\top.
	\label{def_projop}
	\end{equation}
\label{thm:proj}
\end{lemma}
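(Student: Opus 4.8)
The plan is to pass to vectorized form, where the affine matrix structure becomes an affine subspace of $\R^{mn}$, and then apply the classical formula for orthogonal projection onto an affine subspace. Write $s_0 := \vec{S_0}$. Combining \eqref{def:structure} with \eqref{eq:bfs} gives $\vec{\Scal(p)} = s_0 + \Ss\, p$ for every $p\in\R^{n_p}$, so that under vectorization $\image{\Scal}$ corresponds to the affine subspace $s_0 + \image{\Ss} \subseteq \R^{mn}$. Since $\mvec$ is an isometry between $(\R^{m\times n},\|\cdot\|_F)$ and $(\R^{mn},\|\cdot\|_2)$, the orthogonal projection ${\cal P}_\Scal(X)$ is obtained by orthogonally projecting $\vec X$ onto $s_0 + \image{\Ss}$ and then un-vectorizing the result.

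First I would note that, by the minimality of $n_p$ (equivalently, $\Ss$ has full column rank), the matrix $\Ss^\top\Ss$ is invertible, so $\Ss^\dagger = (\Ss^\top\Ss)^{-1}\Ss^\top$ is well defined and $\Ss\Ss^\dagger$ is the orthogonal projector onto $\image{\Ss}$. The orthogonal projection of a vector $y\in\R^{mn}$ onto the affine subspace $s_0 + \image{\Ss}$ is then $s_0 + \Ss\Ss^\dagger(y - s_0)$; this is the standard least-squares characterization, and it can be verified directly by checking that $s_0 + \Ss\Ss^\dagger(y - s_0)$ lies in the subspace while the residual $(I - \Ss\Ss^\dagger)(y - s_0)$ is orthogonal to $\image{\Ss}$.

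Next I would invoke assumption {\bf (A)}, which (as already observed in the text) yields $\Ss^\top\vec{S_0} = \mathbf{0}$, i.e. $\Ss^\top s_0 = 0$, and hence $\Ss\Ss^\dagger s_0 = 0$. Substituting $y = \vec X$ into the projection formula and simplifying, the projection in vectorized form equals $s_0 + \Ss\Ss^\dagger\vec X$. Finally, $s_0 + \Ss\Ss^\dagger\vec X = \vec{S_0} + \Ss(\Ss^\dagger\vec X) = \vec{\Scal(\Ss^\dagger\vec X)}$ by \eqref{def:structure}–\eqref{eq:bfs}, so un-vectorizing gives ${\cal P}_\Scal(X) = \mvec^{-1}\!\big(s_0 + \Ss\Ss^\dagger\vec X\big) = \Scal(\Ss^\dagger\vec X)$, which is \eqref{def_projop}.

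There is essentially no serious obstacle here: the result is a direct instance of orthogonal projection onto an affine subspace, transported through vectorization. The only places the hypotheses genuinely enter are (i) the invertibility of $\Ss^\top\Ss$, which is exactly the minimality of the parametrization, and (ii) the identity $\Ss^\top\vec{S_0} = \mathbf{0}$, which is what lets the fixed-element term $S_0$ pass through the projector unchanged; both facts are already recorded in the text preceding the lemma. The one point worth stating explicitly for full rigor is that $\mvec$ is an isometry for the relevant inner products, so that orthogonal projections in $\R^{m\times n}$ and in $\R^{mn}$ correspond under vectorization.
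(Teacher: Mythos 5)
Your proposal is correct and follows essentially the same route as the paper: both reduce the problem to the vectorized least-squares problem $\min_p \|\vec{X}-\vec{S_0}-\Ss p\|_2^2$, use the full column rank of $\Ss$ guaranteed by minimality, and invoke $\Ss^\top\vec{S_0}=\mathbf{0}$ from assumption {\bf (A)} to drop the $S_0$ term from $\Ss^\dagger\vec{X-S_0}$. The only cosmetic difference is that you phrase the solution as the standard affine-subspace projection formula, while the paper writes out the normal-equations solution directly.
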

The proof is given in the appendix. With some modifications, this lemma also holds for any affine $\Scal.$
For future reference, using \eqref{def:structure}, \eqref{eq:bfs}, and \eqref{def_projop}, we also have the following equality
\begin{equation}
	\vec{{\cal P}_{\Scal}({X})} = \vec{S_0} + \Pi_\Ss\,\vec{X},
\label{eq:vec_proj}
\end{equation}
where $\Pi_\Ss = \Ss\, \Ss^\dagger = \Ss(\Ss^\top\,\Ss)^{-1}\Ss^\top$ is the orthogonal projector on the image of $\Ss.$

The effect of applying $\Ss^\dagger$ on a vectorized $m\times n$ matrix $X$ is producing a  structure parameter vector by averaging the elements of $X,$ corresponding to the same $S_k$.   
In particular, applying $\Ss^\dagger$ on a (vectorized) structured matrix extracts its structure parameter vector.
Further explanation is provided in the appendix.

\subsection{Parameter view of weighted structured low-rank approximation}
\label{sec:optimization_problem}
This section relates problem \eqref{slra_mat} to an approximation problem in the parameter norm
$$\|x\|_{\overline W}^2 := x^\top \overline W\, x,$$ 
where $\overline W\in\R^{n_p\times n_p}$ is a symmetric positive semidefinite matrix of weights.
If $\overline W$ is the identity matrix, then 
$\|\cdot\|_{\overline W}=\|\cdot\|_2$. 

\begin{lemma}
Problem \eqref{slra_mat} is equivalent to
the following problem
\begin{equation}
	\min_{\hat p} \|p - \hat p\|^2_{\overline W}\,\quad\mbox{subject to }\, \rank{\Scal(\hat p)}\leq r, 
\label{slra}
\end{equation}
with 
\begin{equation}
	\overline W = \Ss^\top W \Ss.
\label{WovlToW}
\end{equation}
\end{lemma}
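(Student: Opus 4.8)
The plan is to set up an explicit one-to-one correspondence between the feasible set of \eqref{slra_mat} and that of \eqref{slra} under which the two cost functions agree, so that the two problems have the same optimal value and their minimizers are in bijection.

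First I would use that $D$ is structured, say $D = \Scal(p)$, and recall that minimality of $n_p$ means precisely that $\Ss$ has full column rank. By \eqref{def:structure} and \eqref{eq:bfs} the map $q \mapsto \Scal(q)$ is, in vectorized form, $q \mapsto \vec{S_0} + \Ss\,q$; since $\Ss$ is injective this is an affine bijection of $\R^{n_p}$ onto $\image{\Scal}$. Hence a matrix $\hat D$ is structured if and only if $\hat D = \Scal(\hat p)$ for a unique $\hat p \in \R^{n_p}$, and then $\rank{\hat D} = \rank{\Scal(\hat p)}$; so the constraint ``$\rank{\hat D} \le r$ and $\hat D$ structured'' is exactly the constraint $\rank{\Scal(\hat p)} \le r$, and the feasible sets of the two problems correspond via $\hat D = \Scal(\hat p)$.

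Next I would rewrite the objective along this correspondence. For $\hat D = \Scal(\hat p)$ the fixed part cancels in $\vec{D} - \vec{\hat D} = (\vec{S_0} + \Ss\,p) - (\vec{S_0} + \Ss\,\hat p) = \Ss\,(p - \hat p)$, so that, by the definition of the seminorm $\|\cdot\|_W$,
\begin{equation*}
	\|D - \hat D\|_W^2 = (\vec{D} - \vec{\hat D})^\top W\,(\vec{D} - \vec{\hat D}) = (p - \hat p)^\top\,\Ss^\top W \Ss\,(p - \hat p) = \|p - \hat p\|_{\overline W}^2,
\end{equation*}
with $\overline W := \Ss^\top W \Ss$, which is symmetric and positive semidefinite because $W$ is. Together with the previous paragraph this shows that corresponding feasible points have equal cost, which yields the claimed equivalence (equal optimal values, and $\hat D^\star = \Scal(\hat p^\star)$ for minimizers).

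There is no genuinely difficult step; the one place that needs care is the reduction from a structured matrix variable $\hat D$ to the free parameter vector $\hat p$, which is exactly where the minimality of $n_p$ — equivalently, the full column rank of $\Ss$ afforded by assumption \textbf{(A)} — is essential. Were $\Ss$ rank deficient, several $\hat p$ would map to the same $\hat D$ and one would additionally have to check that $(p - \hat p)^\top \Ss^\top W \Ss\,(p - \hat p)$ is independent of the representative chosen. I also note that, since $\|\cdot\|_W$ is only a seminorm, so is $\|\cdot\|_{\overline W}$, but this is harmless: the argument is a pure change of variables and uses no definiteness of $W$.
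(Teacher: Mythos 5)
Your proof is correct and follows essentially the same route as the paper's: the key computation $\vec{D}-\vec{\hat D}=\Ss(p-\hat p)$, hence $\|D-\hat D\|_W^2=(p-\hat p)^\top\Ss^\top W\Ss\,(p-\hat p)$, is exactly the chain of equalities in the paper's proof. You merely make explicit the bijection between structured matrices and parameter vectors (via the full column rank of $\Ss$, which comes from the minimality of $n_p$ rather than from assumption \textbf{(A)} per se), a point the paper leaves implicit.
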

\begin{proof}
Indeed,
\begin{equation*}
\begin{array}{rcl}
	\|\Scal(p)-\Scal(p_0)\|_W^2 
	& = & \vec{\Scal(p)-\Scal(p_0)}^\top W \, \vec{\Scal(p)-\Scal(p_0)} \\[1mm]
	& = & (\Ss (p-p_0))^\top W\, \Ss (p - p_0)\\[1mm]
	& = & (p-p_0)^\top \overline W\, (p-p_0) \\[1mm]
	& = & \|p-p_0\|_{\overline W}^2.
\end{array}
\label{rel:W}
\end{equation*}
Therefore, for $\overline W$ and $W$ related by \eqref{WovlToW}, problems \eqref{slra} and \eqref{slra_mat} are equivalent.\hfill
\end{proof}

In the literature, problem \eqref{slra} is sometimes referred to as the main problem from which \eqref{slra_mat} is derived.

\section{Penalized structured low-rank approximation}
\label{sec:regularized_formulation}
Each of the constraints in \eqref{slra_mat} can easily be handled separately.
Approximating a matrix by a structured matrix without imposing low-rank can be performed by orthogonally projecting the matrix on the space of structured matrices (see Section~\ref{sec:orthogonal_projections}). Unweighted low-rank approximation without imposing structure can be done using truncated singular value decomposition SVD~\cite{GVL96}.
However, imposing both low-rank and fixed structure on the approximation is nontrivial even in the unweighted case (when $\overline W =I_{n_p}$).
Likewise, due to the non-convexity of the rank constraint, the weighted low-rank approximation problem is difficult already in the unstructured case (when $\Ss = I$) \cite{SreJaa03}. 

We approach the weighted structured low-rank approximation problem from a new point of view, namely by a penalty technique. We propose two novel reformulations and discuss their relation to the original problem.

The main idea is to solve a series of related simpler subproblems, the solution of each subsequent problem being forced closer to the feasible region of the main problem. One of the requirements (low-rank or structure) will always be imposed, while the other one will be satisfied only upon convergence.
We have the following two choices (see Figure~\ref{fig:optimization_problems}):
\begin{figure}[htb]
\centering
\begin{tikzpicture}[>=triangle 45]
	\draw (0,0) node[minimum size = 1.5cm, draw, rounded  corners=8pt] (a11)
		{\begin{tabular}{cl}
			$\widehat D$ in \eqref{slra_mat}: & $\rightarrow\,$ low-rank constraint\\ 
				& $\rightarrow\,$ structure constraint
		\end{tabular}};
	\draw (-3.25,-2.5) node[minimum size = 1.5cm, draw, rounded  corners=8pt] (a1r) 
		{\begin{tabular}{l}
			$PL$ in \eqref{def:main_problem}:\\[1mm]
			$\rightarrow\,$ low-rank $\checkmark$\\ 
			$\rightarrow\,$ penalized structure deviation
		\end{tabular}};
	\draw (3.25,-2.5) node[minimum size = 1.5cm, draw, rounded  corners=8pt] (ar1)
		{\begin{tabular}{l}
			${\cal P}_{\Scal}(PL)$ in \eqref{def:alternative_main_problem}, \eqref{def:new}:\\[1mm]
			$\rightarrow\,$ penalized low-rank deviation\\
			$\rightarrow\,$ structure $\checkmark$
		\end{tabular}};
	\draw[->,thick] (a11.south) -- (a1r.north);
	\draw[->,thick] (a11.south) -- (ar1.north);
\end{tikzpicture}
\caption{Optimization problems}
\label{fig:optimization_problems}
\end{figure}
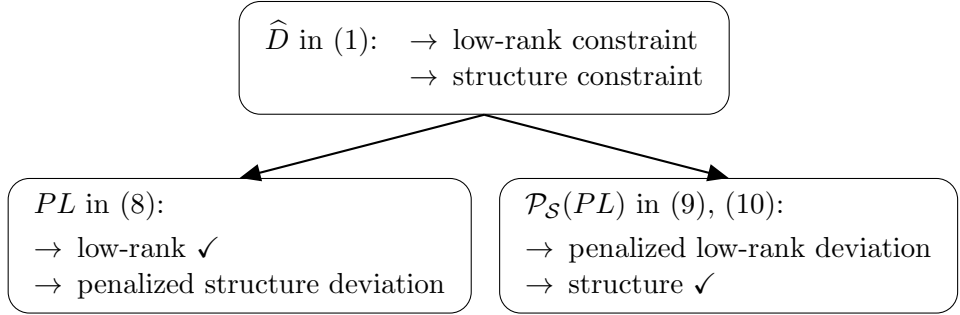
\begin{itemize}
	\item penalize the structure deviation
		\begin{equation}
		\min_{P,\,L} \|D-PL\|^2_W +\lambda\|PL - {\cal P}_{\Scal}({PL})\|^2_F,
		\label{def:main_problem}
		\end{equation}
		where $\lambda$ is a penalty parameter (discussed in Section~\ref{sec:lambda}),
		$\|\cdot\|_F$ stands for the Frobenius norm
		and ${\cal P}_{\Scal}({PL})$ is defined in \eqref{def_projop}, or
	\item penalize the low-rank deviation
		\begin{equation}
		\min_{P,\,L} \|D-{\cal P}_{\Scal}(PL)\|^2_W +\lambda\|PL - {\cal P}_{\Scal}({PL})\|^2_F.
		\label{def:alternative_main_problem}
		\end{equation}
\end{itemize}

The choice of $\lambda$ is discussed in Section~\ref{sec:lambda}. Note that for $\lambda = \infty$ the term $\|PL - {\cal P}_{\Scal}({PL})\|$ has to be zero and the three problems (\ref{slra_mat}), (\ref{def:main_problem}) and (\ref{def:alternative_main_problem}) are equivalent. The interpretations of (\ref{def:main_problem}) and (\ref{def:alternative_main_problem}) are however different. 
In \eqref{def:main_problem}, $PL$ is a matrix of low rank but it only approximately obeys the given structure. Forcing $\|PL - {\cal P}_{\Scal}({PL})\|^2_F$ to zero forces $PL$ to have the required structure as well. 
In \eqref{def:alternative_main_problem}, the iterate can be considered to be ${\cal P}_{\Scal}({PL}),$
i.e., at each iteration the iterate is a structured matrix but the low-rank constraint is only approximately satisfied. Forcing $\|PL - {\cal P}_{\Scal}({PL})\|^2_F$ to zero forces the iterate to have low-rank as well.

Since $\Ss$ is of full rank,
given $\overline W$ in problem \eqref{slra}, there are many possibilities to choose $W$ satisfying \eqref{WovlToW},
such that problems \eqref{slra_mat} and \eqref{slra} are equivalent. However, the following holds true.

\begin{remark}
	If $W$ satisfies \eqref{WovlToW}, problem (\ref{def:alternative_main_problem}) is independent of the choice of $W$
	and can be formulated using $\overline W$ 
	in the  following way
	\begin{equation}
		\boxed{\min_{P,\,L} \|p- \Ss^\dagger \vec{PL}\|^2_{\overline W} +\lambda\|PL - {\cal P}_{\Scal}({PL})\|^2_F.}
		\label{def:new}
	\end{equation}
\end{remark}

Because of this reason, we will focus on problem formulation \eqref{def:alternative_main_problem} and its equivalent representation \eqref{def:new}.
For a particular choice $W_* = (\Ss^\dagger)^\top {\overline W}\Ss^\dagger$ (which satisfies \eqref{WovlToW}), problem \eqref{def:main_problem} is equivalent to \eqref{def:new}.

\begin{remark}[Existence of solution]
A known issue with the structured low-rank approximation problem is the possible non-existence of solution with fixed rank \cite{chu2003structured}. The proposed approach avoids this issue by requiring that the rank of the approximation is bounded from above by $r$. This way the feasible set is closed. Then if $S_0=\mathbf{0}$ (which is the case for most common structures), then the feasible set is nonempty and solution always exists. We note, however, that in the non-generic case when 
the solution of \eqref{slra_mat} is of lower rank, the proposed algorithm has to be modified for the convergence results to hold.
\end{remark}

\section{The proposed algorithm}
\label{sec:algorithm}
In this section, we propose an algorithm in the framework of the penalty methods. We first discuss how the minimization problem~\eqref{def:new} can be solved for a fixed value of the penalty parameter $\lambda$ and then present the algorithmic and computational details related to the proposed algorithm.

\subsection{Description of the algorithm}
\label{sec:alg}
The main idea is to solve the minimization problem (\ref{def:new}) by 
alternatingly improving the approximations of $L$, for fixed $P$,
\begin{equation}
	\begin{array}{l}
		\displaystyle{\min_{L} \|p- \Ss^\dagger\, \vec{PL}\|^2_{\overline W} +\lambda\|PL - {\cal P}_{\Scal}({PL})\|^2_F}
	\end{array}
\label{def:subproblem1}
\end{equation}
and of $P$, for fixed $L$,
\begin{equation}
	\begin{array}{l}
		\displaystyle{\min_{P} \|p- \Ss^\dagger\, \vec{PL}\|^2_{\overline W} +\lambda\|PL - {\cal P}_{\Scal}({PL})\|^2_F.}
	\end{array}
\label{def:subproblem2}
\end{equation}

Let $I_n$ be the $n\times n$ identity matrix and let '$\otimes$' be the Kronecker product 
$$X\otimes Y = \begin{bmatrix}
 x_{11}Y & \cdots & x_{1n}Y\\
 \vdots & \ddots & \vdots\\
 x_{m1}Y & \cdots & x_{mn}Y
 \end{bmatrix},\quad
\mbox{for } X\in\R^{m\times n}.$$

\begin{lemma}
\label{lem:LS_reformulation}
Problems \textnormal{(\ref{def:subproblem1})} and \textnormal{(\ref{def:subproblem2})} are equivalent to the following least squares problems
\begin{equation}
\begin{array}{lcc}
(\ref{def:subproblem1}) & \Leftrightarrow &
	 \displaystyle{\min_{L}} \left\|
		\begin{bmatrix} \overline M\, \Ss^\dagger\\[2mm]\sqrt{\lambda}\Pi_{\Ss_\perp}\end{bmatrix}\,
		(I_n\otimes P)\,\vec{L}
		-\begin{bmatrix} \overline M p\\[2mm]\sqrt{\lambda}\vec{S_0}\end{bmatrix}\right\|^2_2,
\\ [10mm]
(\ref{def:subproblem2}) & \Leftrightarrow & 
	\displaystyle{\min_{P}} \left\|
		\begin{bmatrix} \overline M\, \Ss^\dagger\\[2mm]\sqrt{\lambda}\Pi_{\Ss_\perp}\end{bmatrix}\,
		(L^\top\otimes I_m)\,\vec{P}
		-\begin{bmatrix} \overline M p\\[2mm]\sqrt{\lambda}\vec{S_0}\end{bmatrix}\right\|^2_2,
\end{array}
\label{min2a}
\end{equation}
for an $\overline M\in\R^{n_p \times n_p}$ with $\overline W = \overline M^\top \overline M$
and $\Pi_{\Ss_\perp} = (I_{mn} - \Pi_{\Ss})$ being the orthogonal projector on the left kernel of $\Ss$.
\end{lemma}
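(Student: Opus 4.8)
The plan is to rewrite the objective in \eqref{def:subproblem1}, and symmetrically in \eqref{def:subproblem2}, as a single squared Euclidean norm in the unknown $\vec{L}$ (resp. $\vec{P}$). Three ingredients are needed: a square-root factorization of the weight matrix, the formula \eqref{eq:vec_proj} for the projection, and the Kronecker vectorization identities $\vec{PL} = (I_n\otimes P)\vec{L} = (L^\top\otimes I_m)\vec{P}$.

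First I would treat the data-fitting term. Since $\overline W$ is symmetric positive semidefinite it admits a factorization $\overline W = \overline M^\top\overline M$, so $\|p - \Ss^\dagger\vec{PL}\|_{\overline W}^2 = \|\overline M p - \overline M\,\Ss^\dagger\vec{PL}\|_2^2$. Next the penalty term: by \eqref{eq:vec_proj}, $\vec{{\cal P}_{\Scal}(PL)} = \vec{S_0} + \Pi_\Ss\vec{PL}$, whence $\vec{PL - {\cal P}_{\Scal}(PL)} = (I_{mn}-\Pi_\Ss)\vec{PL} - \vec{S_0} = \Pi_{\Ss_\perp}\vec{PL} - \vec{S_0}$, and therefore $\lambda\|PL - {\cal P}_{\Scal}(PL)\|_F^2 = \|\sqrt\lambda\,\Pi_{\Ss_\perp}\vec{PL} - \sqrt\lambda\,\vec{S_0}\|_2^2$.

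Adding the two squared $2$-norms and using the elementary identity $\|a\|_2^2 + \|b\|_2^2 = \big\|\bmx a\\ b\emx\big\|_2^2$, the objective of \eqref{def:subproblem1} (and of \eqref{def:subproblem2}) becomes
$$\left\|\bmx \overline M\,\Ss^\dagger\\ \sqrt\lambda\,\Pi_{\Ss_\perp}\emx\vec{PL} - \bmx \overline M p\\ \sqrt\lambda\,\vec{S_0}\emx\right\|_2^2.$$
Substituting $\vec{PL} = (I_n\otimes P)\vec{L}$ (for fixed $P$) gives the first line of \eqref{min2a}, and substituting $\vec{PL} = (L^\top\otimes I_m)\vec{P}$ (for fixed $L$) gives the second; since each substitution is a bijection between the matrix and its vectorization, the reformulated least squares problems are equivalent to \eqref{def:subproblem1} and \eqref{def:subproblem2}.

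No step is genuinely hard; the only points requiring care are applying \eqref{eq:vec_proj} so that the fixed part $\sqrt\lambda\,\vec{S_0}$ migrates to the right-hand side with the correct sign, and pairing each of the two subproblems with the appropriate vectorization identity. Existence of the factor $\overline M$ is guaranteed by positive semidefiniteness of $\overline W$; one could additionally observe that $\Ss^\top\vec{S_0}=\mathbf{0}$ from assumption {\bf (A)} yields $\Pi_{\Ss_\perp}\vec{S_0}=\vec{S_0}$, but this is not needed for the derivation.
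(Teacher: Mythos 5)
Your proposal is correct and follows essentially the same route as the paper's proof: factor $\overline W = \overline M^\top \overline M$, use \eqref{eq:vec_proj} to write the penalty term as $\|\Pi_{\Ss_\perp}\vec{PL}-\vec{S_0}\|_2^2$, stack the two residuals into one $2$-norm, and substitute the appropriate Kronecker vectorization identity for each subproblem. No gaps; your closing observation that $\Pi_{\Ss_\perp}\vec{S_0}=\vec{S_0}$ is indeed not needed here (the paper only invokes it later when analyzing the constraints).
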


The proof is given in the appendix.

Both reformulations in Lemma~\ref{lem:LS_reformulation} are least squares problems and can be solved in closed form.
For fixed $\lambda$, we propose an algorithm in the framework of alternating least squares and block coordinate descent, namely we alternatingly improve the approximations of $L$ and of $P$ by solving the least squares problems in (\ref{min2a}). We discuss the update strategy for $\lambda$ in Section~\ref{sec:lambda}.
The choice of initial approximation $P_0$ for $P$ and the stopping criteria are discussed in Section~\ref{sec:init_stop}. 
The summary of the proposed algorithm is presented in Algorithm~\ref{alg:reg_slra}.

\begin{algorithm}[htb]
\caption{Structured low-rank approximation by factorization}
\label{alg:reg_slra}
\begin{algorithmic}[1]
\INPUT $p\in\R^{n_p}$, $S_0\in\R^{m\times n},$ $\Ss\in\R^{mn\times n_p}$,
	$\overline W = \overline M^\top \overline M \in \R^{n_p\times n_p}$, $r\in\mathbb{N}$,
	$P_0\in\R^{m\times r}\!$.
\OUTPUT Factors $P\in\R^{m\times r}$ and $L\in\R^{r\times n},$ corresponding to a 
	structured low-rank approximation problem (\ref{def:new}).
\STATE Set $\Ss^\dagger = (\Ss^\top  \Ss)^{-1}\Ss^\top.$
\STATE Set $P=P_0$, $\lambda_1 =1.$  
\FOR {$j = 1,2,\ldots$ until a stopping criterion is satisfied}
	\FOR {$k = 1,2,\ldots$ until a stopping criterion is satisfied}
		\STATE Update $L$ from \eqref{def:subproblem1}.
		\STATE Update $P$ from \eqref{def:subproblem2}.
	\ENDFOR
	\STATE Set $\lambda_{j+1}$ such that $\lambda_{j+1}>\lambda_{j}.$
\ENDFOR
\end{algorithmic}
\end{algorithm}

Due to the simplicity of Algorithm~\ref{alg:reg_slra}, dealing with weights, missing elements and structures with fixed elements is straightforward. The weight matrix $\overline W$ and the matrix with fixed elements $S_0$ are readily introduced in \eqref{min2a} and thus also in Algorithm~\ref{alg:reg_slra}.
Dealing with missing elements is realized by introducing zeros in the weight matrix $\overline W$ at the positions corresponding to the missing elements. 
Numerical examples are introduced in Section~\ref{sec:examples}.

\subsubsection{Parameter $\lambda$}
\label{sec:lambda}
In theory, if we fix $\lambda = \infty$, then (\ref{def:new}) is the
exact structured low-rank approximation problem. In practice, we may fix $\lambda$
to a ``large enough'' value and the solution $PL$ is only approximately a structured matrix.
The higher the value of $\lambda$, the better the structure constraint is satisfied; 
however, too large values may lead to numerical issues. 

Alternatively, adaptive schemes for updating $\lambda$ are also possible.
We can start from a small value and increase it with each iteration or set of iterations. This way we allow 
the algorithm
to move to a ``good region'' first and then impose more strictly the structure constraint \cite{NocWri06}. 
The following strategy has been proposed in \cite[\S 17.1]{NocWri06}: if solving the previous subproblem was expensive, increase $\lambda$ only modestly, e.g., $\lambda_{j+1}=1.5\lambda_j$. If solving the previous subproblem was cheap, increase $\lambda$ more ambitiously, $\lambda_{j+1}=10\lambda_j.$

\subsubsection{Initial guess and stopping criterion}
\label{sec:init_stop}
Let $D=U_r \Sigma_r V_r^\top$ be the truncated SVD of the given matrix $D$ ($D=\Scal(p)$). 
We initialize $P$ by $U_r$.
For small $\lambda$, the second term in the objective function is also small. In particular, for $\lambda = 0$, problem \eqref{def:main_problem} is exactly the (unstructured) low-rank approximation problem, for which truncated SVD provides a globally optimal solution (in the unweighted case). Problem \eqref{def:alternative_main_problem} and thus Problem \eqref{def:new} are equivalent to Problem \eqref{def:main_problem} for $\lambda = \infty$ and are still closely related for small $\lambda$ as well. This is why the truncated SVD provides a good initial approximation for Problem \eqref{def:alternative_main_problem} and thus also for Problem \eqref{def:new}. We solve a series of related subproblems with increasing penalty parameter $\lambda$. Each subsequent subproblem is initialized with the solution of the previous one, providing a good initialization for every subproblem.

Consider the following stopping criteria. For 
$\lambda$ fixed to $\lambda_j$, stop when the 
derivatives of the objective function are smaller than $\tau_j$ with $\tau_j \rightarrow 0$ as $j\rightarrow \infty.$ 
In practice, if we do not want to compute derivatives, the algorithm is often 
stopped when there is little change in the column space of $P$, 
although further progress can potentially be achieved.
Note that for small $\lambda$ we do not need to solve the problem exactly.
Thus, we can stop 
earlier and avoid slow convergence. Only  
when $\lambda$ becomes large, good approximation is required.
We stop iterating when $\lambda$ is ``large enough'', e.g, $10^{14}$. 

We declare that $PL$ is a structured matrix if
$$\frac{\|PL - {\cal P}_{\Scal}({PL})\|^2_F}{\|PL\|^2_F}<\varepsilon,$$
for a small $\varepsilon$, e.g., $\varepsilon = 10^{-12}.$

\subsubsection{Computational complexity}
The main computational cost is due to solving the least squares problems in \eqref{min2a}, which is equivalent to solving two systems of linear equations. The computational cost for constructing the matrices and the vector of the systems is smaller than 
the cost for solving the system. The size of the systems' matrices are $(n_p + mn)\times rn$ and $(n_p + mn) \times rm,$ respectively. Suppose that $m\leq n$ and recall that $n_p\leq mn.$ Then, the cost for one 
step of the proposed algorithm is 
$$O((rn)^2 (n_p + mn)) = O(n^3mr^2).$$
Note that this estimate does not take into account the available structure and sparsity of the matrices in \eqref{min2a}. If the structure and sparsity are exploited, faster computation can be performed. 
Additionally, compared to the kernel approaches, which are fast for large values of $r$ (preferably $r=m-1$), the proposed approach is designed for small values of $r$.
If the structure is not taken into account, the kernel approaches also have computational cost that is cubic in $n$, namely $O((m-r)^3 n^3)$,
and moreover their cost per iteration is higher in $m$ for small~$r$.

\subsection{Convergence properties}
Algorithm~\ref{alg:reg_slra} falls into the setting of the pe\-nal\-ty methods for constrained optimization~\cite[\S 17.1]{NocWri06} whose convergence properties are well understood.
{For fixed $\lambda$}, the proposed algorithm is an alternating least squares (or block coordinate descent) algorithm. 
Since we can solve the least squares problems in (\ref{min2a}) in closed form, every limit point of the generated sequence is a stationary point~\cite[\S 1.8.1, \S 2.7]{bertsekas1999np}\cite{grippo2000cbn}. 
The convergence rate of these methods is linear.
	
For the convergence properties of the algorithm as $\lambda\rightarrow \infty$, we have the following theorem borrowed from the theory of the quadratic penalty method.
We first define $c(P,L) = \begin{bmatrix} c_1(P,L) & \cdots & c_{mn}(P,L)\end{bmatrix}^\top$ as the vector of penalties
\begin{equation}
			c(P,L) := \mvec (PL - {\cal P}_{S}(PL)).
\label{eq:c}
\end{equation}
\begin{theorem}\textnormal{\cite[Th. 17.2.]{NocWri06}}
\label{th:convergence}
For $\tau_j\rightarrow 0$ and $\lambda_j \rightarrow \infty,$ if a limit point $(P,L)^\ast$ of the sequence $\{(P,L)_j\}$ generated by Algorithm~\ref{alg:reg_slra} is infeasible, it is a stationary point of the function $\|c(P,L)\|_2^2.$ On the other hand, if a limit point $(P,L)^\ast$ is feasible and the constraint gradients $\nabla c_i((P,L)^\ast)$ are linearly independent, then $(P,L)^\ast$ is a KKT (Karush-Kuhn-Tucker) point for problem \eqref{slra_mat}. For such 
points, we have for any infinite subsequence ${\cal K}$ such that 
$\lim_{j\in{\cal K}} (P,L)_j = (P,L)^\ast$ that
	$$\lim_{j\in{\cal K}} \,-\lambda_j c_i((P,L)_j) = \nu^\ast_i, \quad\mbox{for all } i=1,\ldots,mn,$$
where $\nu^\ast$ is the multiplier vector that satisfies the KKT conditions.
\end{theorem}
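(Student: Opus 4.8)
The plan is to observe that Algorithm~\ref{alg:reg_slra}, read along the outer index $j$, is exactly the quadratic penalty method of \cite[\S 17.1]{NocWri06} applied to the constrained problem
\[
\min_{P,L}\ f(P,L)\quad\mbox{subject to}\quad c(P,L)=\mathbf{0},
\qquad f(P,L):=\|D-{\cal P}_{\Scal}(PL)\|_W^2,
\]
which by the discussion around \eqref{def:alternative_main_problem} is equivalent to \eqref{slra_mat}; its penalized objective is $Q(P,L;\lambda):=f(P,L)+\lambda\|c(P,L)\|_2^2$. For each $j$ the inner loop (lines~4--7) returns an iterate $(P,L)_j$ with $\|\nabla Q((P,L)_j;\lambda_j)\|\le\tau_j$, which is precisely the inner stopping criterion and is attainable because the block-coordinate descent on \eqref{min2a} has all its limit points stationary, as noted just before the theorem. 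Once this identification is in place the statement is \cite[Th.~17.2]{NocWri06} verbatim, and I would specialize its proof to the present $f$ and $c$.

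Concretely, I would write $\nabla Q((P,L)_j;\lambda_j)=\nabla f((P,L)_j)+2\lambda_j\,A((P,L)_j)\,c((P,L)_j)$, where $A(P,L)$ is the matrix whose columns are the constraint gradients $\nabla c_i(P,L)$, and fix an infinite subsequence ${\cal K}$ along which $(P,L)_j\to(P,L)^\ast$. Since $f$, $c$ and their gradients are polynomial in the entries of $P$ and $L$ and the subsequence stays in a compact set, $\nabla f_j$, $A_j$ and $c_j$ are bounded and converge to $\nabla f^\ast$, $A^\ast$ and $c^\ast$. In the infeasible case ($c^\ast\neq\mathbf{0}$), dividing the bound $\|\nabla f_j+2\lambda_j A_j c_j\|\le\tau_j$ by $2\lambda_j$ and letting $j\to\infty$ in ${\cal K}$ annihilates the $\nabla f_j/(2\lambda_j)$ and $\tau_j/(2\lambda_j)$ terms and leaves $A^\ast c^\ast=\mathbf{0}$; since $\nabla(\tfrac12\|c\|_2^2)=Ac$, this is exactly stationarity of $\|c(P,L)\|_2^2$ at $(P,L)^\ast$. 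In the feasible case ($c^\ast=\mathbf{0}$) I would set $\nu^{(j)}:=-\lambda_j c_j$ and use linear independence of the columns of $A^\ast$: then $A^\ast$, hence $A_j$ for large $j\in{\cal K}$, has full column rank, and $\nu^{(j)}=(A_j^\top A_j)^{-1}A_j^\top(\nabla f_j-\eta_j)$ with $\|\eta_j\|\le\tau_j$, so $\nu^{(j)}\to\nu^\ast:=(A^{\ast\top}A^\ast)^{-1}A^{\ast\top}\nabla f^\ast$; passing to the limit in the stationarity bound gives $\nabla f^\ast-A^\ast\nu^\ast=\mathbf{0}$, which together with $c^\ast=\mathbf{0}$ is the KKT system for \eqref{slra_mat}, and by construction $-\lambda_j c_i((P,L)_j)=\nu^{(j)}_i\to\nu^\ast_i$ for every $i$.

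The step I expect to need the most care is the limit in the feasible case: one must check that $A_j$ is eventually of full column rank and that $\|(A_j^\top A_j)^{-1}\|$ remains bounded along ${\cal K}$, which follows from continuity of $A(\cdot)$ and the hypothesis that $A^\ast$ has full column rank, and this is the only place where the ``linearly independent constraint gradients'' assumption is genuinely used. A minor point worth spelling out is the uniform boundedness of $\nabla f$ along the convergent subsequence, which is immediate because $f$ is a fixed quadratic form composed with the bilinear map $(P,L)\mapsto PL$, so $\nabla f$ is polynomial in the iterates and bounded on the compact set consisting of the subsequence together with its limit. Everything else is bookkeeping: matching $Q$ with the quadratic penalty function, reading the inner tolerance $\tau_j$ as an approximate-stationarity condition, and invoking \cite[Th.~17.2]{NocWri06}.
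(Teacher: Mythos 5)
The paper gives no proof of this theorem: it is quoted verbatim from \cite[Th.~17.2]{NocWri06}, and your reconstruction is precisely the standard quadratic-penalty argument underlying that citation, so your approach is essentially the paper's (i.e., Nocedal--Wright's) own, correctly specialized to $f(P,L)=\|D-{\cal P}_{\Scal}(PL)\|_W^2$ and $c$ as in \eqref{eq:c}. One bookkeeping slip: with the penalty written as $\lambda\|c\|_2^2$ (no factor $\tfrac12$), the approximate stationarity relation $\nabla f_j+2\lambda_j A_j c_j=\eta_j$ identifies the multiplier estimate as $-2\lambda_j c_j$, so your definition $\nu^{(j)}:=-\lambda_j c_j$ is off by a factor of $2$ from the formula $\nu^{(j)}=(A_j^\top A_j)^{-1}A_j^\top(\nabla f_j-\eta_j)$ that you then derive; this is harmless (the same normalization ambiguity is already present in the theorem statement as transcribed in the paper) but should be made internally consistent.
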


We next discuss the applicability of the theorem in our setting and some important special cases. We start with a discussion on the constraints.

\subsubsection{Constraints}
There are $mn$ constraints in total, defined by
\begin{equation}
\begin{array}{lcl}
	PL = {\cal P}_{\Scal}(PL) \,\,\Longleftrightarrow\,\, c(P,L) = \mathbf{0} 
	& \Longleftrightarrow & \Pi_{\Ss_\perp}\,\vec{PL} -\vec{S_0} = \mathbf{0}\\[1mm]
\end{array}
\label{eq:constraint}
\end{equation}
Note, however, that by assumption {\bf (A)}, $\Ss^\top \vec{S_0}=0$ and thus
$$\Pi_{\Ss_\perp} \vec{S_0} = (I_{mn} - \Pi_\Ss)\vec{S_0} = (I_{mn} - \Ss(\Ss^\top\Ss)^{-1}\Ss^\top)\vec{S_0} = \vec{S_0}.$$
Thus, \eqref{eq:c} can be written as
\begin{equation}
\begin{array}{lcl}
	c(P,L) & = & \Pi_{\Ss_\perp}\,(\vec{PL} -\vec{S_0}) \\[1mm]
	& = & \Pi_{\Ss_\perp}\,((L^\top \otimes I_m)\,\vec{P} -\vec{S_0})\\[1mm]
	& = & \Pi_{\Ss_\perp}\,((I_n\otimes P)\,\vec{L} -\vec{S_0}).
\end{array}
\label{eq:constraint2}
\end{equation}

The optimization variables are the entries of $P$ and the entries of $L$ and
the above equivalences show that each constraint is linear in each optimization variable.
It can be concluded that the matrix of constraint gradients
(the Jacobian of the vector of constraints) is
\begin{equation}
	\calJ_c(P,L) = \Pi_{\Ss_\perp}\,\begin{bmatrix}L^\top \otimes I_m & I_n\otimes P\end{bmatrix} 
	\in \bbR^{mn \times (mr+nr)},
\label{eq:constraint_gradients}
\end{equation}
where the gradient of the $i$-th constraint is the $i$-th row of the matrix.

\subsubsection{$S_0=\mathbf{0}$ and feasibility of the limit points}
The class of structures with $S_0 = \mathbf{0}$ is particularly interesting because of the following reasons:
\begin{itemize}
	\item Most common structures ((block-, mosaic-, quasi-) Hankel, (extended) Sylvester, etc.) are in this class.
	\item Problem \eqref{slra_mat} always has a solution since the feasible set is closed and nonempty (the zero matrix is in the feasible set).
	\item The limit points of the sequence generated by Algorithm~\ref{alg:reg_slra} are feasible, as follows directly from the following proposition.
\end{itemize}
\begin{proposition}
\label{prop:feasibility}
Let $S_0 = \mathbf{0}$.
Then any stationary point of $\|c(P,L)\|^2_2$ is feasible.
\end{proposition}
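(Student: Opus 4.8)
The plan is to write the stationarity condition for $\|c(P,L)\|_2^2$ explicitly, using the Jacobian $\calJ_c$ already computed in \eqref{eq:constraint_gradients}, extract from it a single scalar–matrix identity, and then combine that identity with the orthogonality built into the projection ${\cal P}_{\Scal}$ to force the residual to vanish.

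First I would specialize the formulas from the ``Constraints'' subsection to $S_0=\mathbf 0$. By \eqref{eq:c} and \eqref{eq:constraint2}, setting $C:=PL-{\cal P}_{\Scal}(PL)$ we have $c(P,L)=\vec{C}=\Pi_{\Ss_\perp}\vec{PL}$, and by \eqref{eq:vec_proj} also $\vec{{\cal P}_{\Scal}(PL)}=\Pi_\Ss\vec{PL}$. Two standard facts will be used repeatedly: $\Pi_{\Ss_\perp}$ is a symmetric idempotent (hence $\Pi_{\Ss_\perp}\vec{C}=\vec{C}$), and $\Pi_{\Ss_\perp}\Pi_\Ss=(I_{mn}-\Pi_\Ss)\Pi_\Ss=\mathbf 0$.

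Next I would differentiate $\|c(P,L)\|_2^2=c(P,L)^\top c(P,L)$ with respect to the stacked variable $\begin{bmatrix}\vec{P}\\ \vec{L}\end{bmatrix}$; its gradient is $2\,\calJ_c(P,L)^\top c(P,L)$, so at a stationary point $\calJ_c(P,L)^\top c(P,L)=\mathbf 0$. Using \eqref{eq:constraint_gradients}, the symmetry of $\Pi_{\Ss_\perp}$, and $\Pi_{\Ss_\perp}\vec{C}=\vec{C}$, the block of this equation corresponding to the entries of $L$ reads $(I_n\otimes P^\top)\vec{C}=\mathbf 0$, i.e.\ $\vec{P^\top C}=\mathbf 0$, that is $P^\top C=\mathbf 0$. (The $P$-block similarly yields $CL^\top=\mathbf 0$, but one of the two identities is enough.)

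Finally I would conclude feasibility by showing $C=\mathbf 0$ through
\begin{equation*}
\|C\|_F^2=\vec{C}^\top\vec{C}=\vec{C}^\top\vec{PL}-\vec{C}^\top\vec{{\cal P}_{\Scal}(PL)}.
\end{equation*}
The first term vanishes because $\vec{PL}=(I_n\otimes P)\vec{L}$, so $\vec{C}^\top\vec{PL}=\big((I_n\otimes P^\top)\vec{C}\big)^\top\vec{L}=(\vec{P^\top C})^\top\vec{L}=0$ by stationarity; the second term vanishes because $\vec{C}^\top\vec{{\cal P}_{\Scal}(PL)}=\vec{PL}^\top\Pi_{\Ss_\perp}\Pi_\Ss\vec{PL}=0$. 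Hence $C=\mathbf 0$, i.e.\ $PL={\cal P}_{\Scal}(PL)$ and $c(P,L)=\mathbf 0$, which is exactly feasibility. I do not expect a genuine obstacle here: the only care needed is the bookkeeping with the $\vec{\cdot}$/Kronecker identities $(I_n\otimes P^\top)\vec{C}=\vec{P^\top C}$ and the use of the idempotence/orthogonality relations for $\Pi_\Ss$ and $\Pi_{\Ss_\perp}$. Conceptually, the point is simply that at a stationary point the residual $C$ is orthogonal both to $\image{\Scal}$ (by definition of ${\cal P}_{\Scal}$, using $S_0=\mathbf 0$) and, through $P^\top C=\mathbf 0$, to the current iterate $PL$ itself, and these two orthogonalities together annihilate $\|C\|_F^2$.
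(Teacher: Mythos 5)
Your proof is correct and is essentially the paper's argument in a lightly repackaged form: the paper writes the stationarity condition as $B^\top B\,\vec{L}=\mathbf{0}$ with $B=\Pi_{\Ss_\perp}(I_n\otimes P)$ and concludes $B\vec{L}=c=\mathbf{0}$ from the quadratic form $\vec{L}^\top B^\top B\,\vec{L}=\|c\|_2^2$, while your identity $P^\top C=\mathbf{0}$ is exactly $B^\top B\,\vec{L}=\mathbf{0}$ and your two-term orthogonality decomposition of $\|C\|_F^2$ computes the same quadratic form. The only cosmetic difference is that you use just one block of the gradient where the paper writes out both, which changes nothing of substance.
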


The proof in given in the appendix.

\subsubsection{Equivalent set of constraints}
Theorem~\ref{th:convergence} cannot be applied directly to the set of constraints $c(P,L),$
because $\rank{\calJ_{c}(P,L)} \leq \rank{\Pi_{\Ss_\perp}} = mn-n_p < mn.$
We will next transform the set of constraints $c(P,L) = \mathbf{0}$  
into an equivalent set of constraints $\tilde c(P,L)= \mathbf{0},$ such that 
the conditions of Theorem~\ref{th:convergence} can be satisfied for  $\tilde c(P,L)$.

Let $\Ss_\perp\in\R^{mn\times (mn-n_p)}$ be a matrix, whose columns span the orthogonal complement of $\Ss$ with 
	$\Ss_\perp^\top \Ss_\perp = I_{mn-n_p}.$
	Then we also have 
		$${\Pi_{\Ss_\perp} = \Ss_\perp (\Ss_\perp^\top \Ss_\perp)^{-1} \Ss_\perp^\top =
			\Ss_\perp\Ss_\perp^\top}.$$
	Let 
		$$\tilde c(P,L) = \Ss_\perp^\top(\vec{PL} - \vec{S_0}).$$
	Then
	\begin{equation}
		\begin{array}{lcl}
		\|c(P,L)\|^2_2 & = & \|\Pi_{\Ss_\perp}(\vec{PL} - \vec{S_0})\|^2_2 \\[1mm]
		 & = & \|\Ss_\perp\Ss_\perp^\top(\vec{PL} - \vec{S_0})\|^2_2 \\[1mm]
		 & = & \|\Ss_\perp^\top(\vec{PL} - \vec{S_0})\|^2_2\\[1mm]
		 & = & \|\tilde c(P,L)\|^2_2.
		\end{array}
	\label{eq:projected_c}
	\end{equation}
	Note that from \eqref{eq:projected_c}, it follows that
		$$c(P,L) = \mathbf{0}\quad \iff \quad \tilde c(P,L) = \mathbf{0}.$$
Similarly to \eqref{eq:constraint_gradients}, the Jacobian of the vector of constraints $\tilde c(P,L)$ is 
\begin{equation}
	\calJ_{\tilde c}(P,L) = \Ss_\perp^\top \bmx L^{\top} \otimes I_m  & I_n \otimes P \emx\in
	\R^{(mn-n_p)\times (mr+nr)}.
\label{eq:new_constraint_gradients}
\end{equation}

\subsubsection{Independence of the constraint gradients}
We will need the following lemma.
\begin{lemma}
\label{lem:rank_2nd_matrix}
Let $P\in\R^{m\times r}$ and $L\in\R^{r\times n}$, then\\[-2mm]
\begin{enumerate}
	\item $\rrank (\begin{bmatrix}L^\top \otimes I_m & I_n\otimes P\end{bmatrix}) \leq  mr+nr-r^2.$\\[-2mm]
	\item The equality holds if and only if $\rank{P} = \rank{L} = r.$
\end{enumerate}
\end{lemma}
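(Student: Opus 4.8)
The plan is to analyze the block matrix $A := \begin{bmatrix} L^\top \otimes I_m & I_n \otimes P \end{bmatrix}$ by understanding its kernel, since bounding the rank is equivalent to bounding $\dim\ker A = (mr+nr) - \rrank(A)$. A vector in the kernel is a pair $(\vec{Y}, \vec{Z})$ with $Y \in \R^{m \times r}$, $Z \in \R^{r \times n}$ satisfying $(L^\top \otimes I_m)\vec{Y} + (I_n \otimes P)\vec{Z} = \mathbf{0}$, which by the standard Kronecker/vec identities reads $YL + PZ = \mathbf{0}$, i.e. $YL = -PZ$. So the first step is to rewrite the condition in matrix form as $YL = -PZ$.

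\textbf{Upper bound.} For the inequality $\rrank(A) \le mr + nr - r^2$, I would show $\dim \ker A \ge r^2$. Take any $K \in \R^{r \times r}$ and set $Y = PK$, $Z = -KL$; then $YL = PKL = -PZ$, so $(PK, -KL)$ lies in the kernel. This gives a linear map $K \mapsto (PK, -KL)$ from $\R^{r\times r}$ into $\ker A$. One must check this map has trivial kernel only under a rank hypothesis, which is exactly the subtle point — but for the mere inequality we do not even need injectivity: instead, argue directly. Actually the cleanest route for the inequality is: the image of the map $(Y,Z) \mapsto YL + PZ$ lands in the column span of $[\,\text{columns of } PZ \text{ and } YL\,]$; more precisely $\rrank(A) = \rrank(\{YL + PZ\}) \le \dim(\text{span of columns generated})$. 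Concretely, $\{YL : Y \in \R^{m\times r}\}$ has dimension $m \cdot \rrank(L) \le mr$ but lives in $\R^{m\times n}$; a counting argument on the sum of subspaces $\mathcal{U} = \{YL\} + \{PZ\}$ together with $\dim A = \dim\{(Y,Z)\} - \dim\ker A$ and the observation that the intersection $\{YL\} \cap \{PZ\}$ always contains $\{PKL : K \in \R^{r\times r}\}$ (dimension $\ge$ something) yields the bound. I expect the smoothest presentation is via the kernel: exhibit the $r^2$-dimensional subspace $\{(PK,-KL) : K\in\R^{r\times r}\}$ of $\ker A$ and prove its dimension is $r^2$ when $\rrank P = \rrank L = r$, and handle the low-rank case separately (where the rank of $A$ only drops further, so the inequality still holds a fortiori).

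\textbf{Equality case.} For part 2, the forward direction (equality $\Rightarrow$ full rank of $P$ and $L$) and the converse both hinge on computing $\dim\ker A$ exactly. If $\rrank P = \rrank L = r$: suppose $(Y,Z) \in \ker A$, so $YL = -PZ$. Since $P$ has full column rank $r$, $PZ$ determines $Z$ uniquely; since $L$ has full row rank $r$, $YL$ determines $Y$ uniquely. The column space of $YL$ lies in the column space of $P$ (as $YL = -PZ$), so $Y L = P W$ for $W = -Z$; then $Y$ ranges over $\{Y : \operatorname{col}(YL) \subseteq \operatorname{col}(P)\}$. Using full column rank of $P$ and full row rank of $L$, one shows $Y$ must be of the form $PK$ and correspondingly $Z = -KL$, so $\ker A$ is exactly the $r^2$-dimensional space above and the map $K \mapsto (PK,-KL)$ is injective (if $PK = 0$ then $K=0$ since $P$ injective). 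Hence $\dim\ker A = r^2$ and equality holds. Conversely, if $\rrank P = p < r$ (or similarly for $L$), I would produce extra kernel elements: pick $0 \ne v \in \ker(P^\top P)$... more directly, pick nonzero $u \in \R^r$ with $Pu = 0$; then for any row vector $w^\top \in \R^{1\times n}$, $(Y,Z) = (0, uw^\top)$ satisfies $PZ = P u w^\top = 0 = YL$, giving an $n$-dimensional family disjoint from the previous construction (since those have $Z = -KL$ with $\operatorname{row}(Z)\subseteq\operatorname{row}(L)$, whereas $w^\top$ is arbitrary). Counting shows $\dim\ker A > r^2$, so the rank is strictly less than $mr+nr-r^2$. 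The symmetric argument handles $\rrank L < r$.

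\textbf{Main obstacle.} The delicate part is the exact kernel computation in the equality case: carefully showing that $YL = -PZ$ together with full rank of both factors forces $(Y,Z) = (PK, -KL)$, and that these are all distinct — i.e. that the two ``obvious'' sources of kernel vectors ($\{(PK,-KL)\}$ and the degenerate families arising from rank deficiency) do not overlap and together account for the full kernel dimension. This amounts to a precise bookkeeping of subspace dimensions $\dim(\operatorname{col}P)$, $\dim(\operatorname{row}L)$ and their interaction inside $\R^{m\times n}$; I would organize it by first reducing to the full-rank case via choosing bases (or by a perturbation/semicontinuity argument: rank is lower semicontinuous, so it suffices to prove equality on a dense set and the bound everywhere), then doing the clean linear-algebra count there.
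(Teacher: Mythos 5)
Your strategy is sound and would yield the lemma, but it is the dual of what the paper does, and the dual turns out to be the harder direction to push through. The paper works with the \emph{left} kernel of $A=\bigl[\,L^\top\otimes I_m \;\; I_n\otimes P\,\bigr]$: a row $\mathrm{vec}^\top(X)$ annihilates $A$ iff $XL^\top=\mathbf{0}$ and $P^\top X=\mathbf{0}$, iff $X=P_\bot Y L_\bot$, so the left kernel is exactly $\mathrm{image}(L_\bot^\top\otimes P_\bot)$ and has dimension $(m-r_P)(n-r_L)$ for \emph{arbitrary} $r_P=\rank{P}$, $r_L=\rank{L}$. This gives $\rrank(A)=mn-(m-r_P)(n-r_L)=mr_L+nr_P-r_Pr_L$ in one stroke, and both the bound and the equality characterization follow by maximizing this expression over $r_P,r_L\le r$ (it is strictly increasing in each argument since $r<\min(m,n)$). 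Your right-kernel route instead identifies $\ker A=\{(Y,Z):YL+PZ=\mathbf{0}\}$ and the subspace $\{(PK,-KL):K\in\R^{r\times r}\}$; this works cleanly in the full-rank case (where your argument that $Y=PK$ and $Z=-KL$ is correct), but in the rank-deficient case the parametrization degenerates: the map $K\mapsto(PK,-KL)$ acquires a kernel of dimension $(r-r_P)(r-r_L)$, and the extra families $\{(Y,\mathbf{0}):YL=\mathbf{0}\}$ and $\{(\mathbf{0},Z):PZ=\mathbf{0}\}$ are \emph{not} disjoint from it as you claim (e.g.\ $K$ with $PK=\mathbf{0}$ gives $(\mathbf{0},-KL)$, which lies in both); your parenthetical actually establishes non-containment rather than disjointness, which suffices for the strict inequality but requires the more careful dimension count you defer. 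Your proposed fallbacks (semicontinuity of rank, or computing $\dim\mathrm{im}(Y,Z)\mapsto YL+PZ = mr_L+nr_P-r_Pr_L$ as a sum of two subspaces intersecting in dimension $r_Pr_L$) are both valid and would close these gaps; the latter is in fact the exact dual of the paper's computation. What the paper's choice of side buys is a single uniform formula with no case analysis; what your side buys is a concrete exhibition of the kernel vectors, which is arguably more illuminating about \emph{why} the rank drops by $r^2$ (the gauge freedom $P\mapsto PK^{-1}$, $L\mapsto KL$ in the factorization).
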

The proof in given in the appendix.

\begin{remark}[The condition $\rank{P}=\rank{L}=r$]
The condition that $P$ and $L$ have rank $r$ is generically satisfied. Nongeneric cases would appear when the exact rank-$r$ problem does not have a solution, i.e., when the solution of \eqref{slra_mat} is of rank lower than $r$. In this case, Algorithm~\ref{alg:reg_slra} can be modified to detect rank deficiency in $P$ and in $L$ and reduce their corresponding dimensions, so that the reduced matrices have full column- and full row-rank, respectively.
\end{remark}

From \eqref{eq:new_constraint_gradients} and Lemma~\ref{lem:rank_2nd_matrix} we have that
\begin{equation}
\rrank (\calJ_{\widetilde c}(P,L)) \leq  \min(mn-n_p, mr+nr-r^2)
\label{eq:min2}
\end{equation}
and thus the following remark holds true.

\begin{remark}[Necessary and sufficient condition for independence of the constraint gradients]
\label{rem:iff_condition_independence}
An assumption of Theorem \ref{th:convergence} is that the constraint gradients are linearly independent in the feasible set,
i.e., that $\calJ_{\tilde c}(P,L)$ has full row rank and thus
\begin{equation}
\rank{\calJ_{\tilde c}(P,L)} = mn-n_p.
\label{eq:rank_J_cond}
\end{equation}
		From \eqref{eq:min2}
		it follows that 
		a necessary condition for \eqref{eq:rank_J_cond} to hold
		is
		\begin{equation}
			mn-n_p \leq mr+nr-r^2.
		\label{necessary_cond}
		\end{equation}
		This condition is often satisfied in practice, for example, for all Hankel approximations 
		with rank reduction by $1$.

		Generically, the condition \eqref{necessary_cond} is also a 
		sufficient condition for \eqref{eq:rank_J_cond} to hold.
\end{remark}
Apart from the case described in Remark~\ref{rem:iff_condition_independence}, another extreme case
in \eqref{eq:min2} may happen, namely that
$\rrank (\mathcal{J}_{\widetilde{c}} (P,L)) = mr+nr-r^2$.
\begin{lemma}\label{prop:prop2}
If a limit point $(P_*,L_*)$ of the sequence $\{(P,L)_j\}$ generated by Algorithm~\ref{alg:reg_slra}
is feasible, and $\rrank (\mathcal{J}_{\widetilde{c}} (P_*,L_*)) = mr+nr-r^2$, then 
\begin{enumerate}
\item there exists an affine subspace $\mathcal{L} \subset \mathbb{R}^{mn-n_p}$ of dimension $(m-r)(n-r) - n_p$ such that any $\nu \in \mathcal{L}$ is a multiplier that satisfies the KKT conditions.
\item $\rrank (\mathcal{J}_{\widetilde{c}} (P,L)) = mr+nr-r^2$ in a neighborhood of $(P_*,L_*)$
\end{enumerate}
\end{lemma}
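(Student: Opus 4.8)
The plan is to work entirely with the linearized constraint structure, since each constraint $\tilde c_i(P,L)=0$ is affine in $P$ (for fixed $L$) and affine in $L$ (for fixed $P$), and the Jacobian $\calJ_{\tilde c}(P,L) = \Ss_\perp^\top[\,L^\top\otimes I_m \ \ I_n\otimes P\,]$ has been computed explicitly in \eqref{eq:new_constraint_gradients}. Throughout I would exploit the assumption $\rrank(\calJ_{\tilde c}(P_*,L_*)) = mr+nr-r^2$, which by Lemma~\ref{lem:rank_2nd_matrix}(2) forces $\rank{P_*}=\rank{L_*}=r$, and also forces the inequality in \eqref{eq:min2} to be an equality of the form $mr+nr-r^2 \le mn-n_p$, i.e.\ $(m-r)(n-r)\ge n_p$, so that the claimed dimension $(m-r)(n-r)-n_p$ in part~1 is nonnegative.

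For part~1, I would first recall that the KKT stationarity condition for \eqref{slra_mat}, rewritten in terms of the $(P,L)$ parametrization and the equivalent constraint set $\tilde c$, reads $\nabla_{(P,L)}\phi(P_*,L_*) = \calJ_{\tilde c}(P_*,L_*)^\top \nu$ for $\nu\in\R^{mn-n_p}$, where $\phi$ is the objective of \eqref{def:new}. Since $(P_*,L_*)$ is a limit point of Algorithm~\ref{alg:reg_slra} with $\tau_j\to0$, it is a stationary point of each subproblem in the limit, so the gradient $\nabla\phi(P_*,L_*)$ lies in the row space of $\calJ_{\tilde c}(P_*,L_*)$ — this is the content already used implicitly in Theorem~\ref{th:convergence}, and I would spell it out as the statement that a particular $\nu_0$ satisfying the equation exists. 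The solution set of $\calJ_{\tilde c}(P_*,L_*)^\top\nu = \nabla\phi(P_*,L_*)$ is then the affine subspace $\mathcal L = \nu_0 + \ker(\calJ_{\tilde c}(P_*,L_*)^\top)$, whose dimension is $(mn-n_p) - \rrank(\calJ_{\tilde c}(P_*,L_*)) = (mn-n_p)-(mr+nr-r^2) = mn - mr - nr + r^2 - n_p = (m-r)(n-r) - n_p$, which is exactly the claimed dimension. I would then verify that every $\nu\in\mathcal L$ is genuinely a KKT multiplier: feasibility of $(P_*,L_*)$ is given, primal stationarity holds by construction of $\mathcal L$, and there are no inequality constraints (complementary slackness is vacuous), so the KKT conditions reduce to exactly the linear system defining $\mathcal L$.

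For part~2, the key observation is that $\rrank(\calJ_{\tilde c})$ is a lower semicontinuous function of $(P,L)$ — the rank of a matrix with continuously varying entries cannot drop below its value at a point in a sufficiently small neighborhood — so $\rrank(\calJ_{\tilde c}(P,L))\ge mr+nr-r^2$ near $(P_*,L_*)$; combined with the universal upper bound $\rrank(\calJ_{\tilde c}(P,L))\le mr+nr-r^2$ from \eqref{eq:min2} (which holds for \emph{all} $P,L$), we get equality throughout the neighborhood. I would state this cleanly using that the set $\{(P,L): \rank{P}=\rank{L}=r\}$ is open (again by lower semicontinuity of rank applied to $P$ and to $L$ separately) and that on this open set Lemma~\ref{lem:rank_2nd_matrix}(2) gives $\rrank([\,L^\top\otimes I_m\ \ I_n\otimes P\,]) = mr+nr-r^2$; the additional projection by $\Ss_\perp^\top$ can only decrease rank, but the hypothesis at $(P_*,L_*)$ shows no decrease occurs there, and then I would argue — this is the one genuinely delicate point — that the rank of $\Ss_\perp^\top A$ where $A$ ranges over the $(mr+nr-r^2)$-dimensional column space of $[\,L^\top\otimes I_m\ \ I_n\otimes P\,]$ also cannot drop near $(P_*,L_*)$, again by semicontinuity of rank of the composite map $(P,L)\mapsto \calJ_{\tilde c}(P,L)$, which is polynomial in the entries. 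The main obstacle I anticipate is making part~1 fully rigorous regarding \emph{why} $\nabla\phi(P_*,L_*)$ lies in $\mathrm{row}(\calJ_{\tilde c})$ — this needs the observation that the penalty-method iterates produce, in the limit, a stationarity condition of the form $\nabla\phi + \lambda_j \calJ_{\tilde c}^\top \tilde c \to 0$, and at a feasible limit point the products $-\lambda_j\tilde c_i$ converge (after passing to a subsequence) precisely because $\calJ_{\tilde c}$ has locally constant rank by part~2, so the two parts are intertwined and should be proved together rather than sequentially.
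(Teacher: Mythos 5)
Your part 2 is correct and is essentially the paper's argument, made slightly more explicit: lower semicontinuity of rank gives $\rrank(\calJ_{\tilde c}(P,L))\ge mr+nr-r^2$ near the limit point, while Lemma~\ref{lem:rank_2nd_matrix} and \eqref{eq:new_constraint_gradients} cap it at $mr+nr-r^2$ everywhere, so the rank is locally constant. The extra ``delicate point'' you raise about the projection $\Ss_\perp^\top$ possibly dropping rank nearby is not needed --- your first argument already closes part 2 on its own. Likewise, the identification of $\mathcal{L}$ as $\nu_0+\ker\bigl(\calJ_{\tilde c}(P_*,L_*)^\top\bigr)$ and the dimension count $(mn-n_p)-(mr+nr-r^2)=(m-r)(n-r)-n_p$ agree with the paper.

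The genuine gap is the one you flag yourself: the existence of $\nu_0$, i.e.\ consistency of $\calJ_{\tilde c}(P_*,L_*)^\top\nu=\nabla f(P_*,L_*)$. You cannot get this from Theorem~\ref{th:convergence} as stated, since its KKT conclusion assumes linearly independent constraint gradients --- precisely what fails in the regime of this lemma when $mr+nr-r^2<mn-n_p$. Your proposed repair (extract $\nabla f_j+2\calJ_j^\top(\lambda_j\tilde c_j)\to 0$ from the penalty iterations and argue the limit stays in $\image(\calJ_*^\top)$ using local constancy of the rank) can be completed, but it requires a subspace-convergence argument in the gap metric that you do not supply, and it forces the two parts to be proved together. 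The paper closes this step with a purely structural observation that makes no reference to the iterates: the objective depends on $(P,L)$ only through the product, $f(P,L)=\|D-PL\|_W^2=g(\vec{PL})$, so the chain rule gives
\[
\nabla f(P_*,L_*)=\bmx L_*^\top\otimes I_m & I_n\otimes P_*\emx^\top\nabla g\bigl(\vec{P_*L_*}\bigr),
\]
which always lies in the row span of $\bmx L_*^\top\otimes I_m & I_n\otimes P_*\emx$. Since $\calJ_{\tilde c}(P_*,L_*)=\Ss_\perp^\top\bmx L_*^\top\otimes I_m & I_n\otimes P_*\emx$, its row span is a priori contained in that of $\bmx L_*^\top\otimes I_m & I_n\otimes P_*\emx$, and the hypothesis $\rrank(\calJ_{\tilde c}(P_*,L_*))=mr+nr-r^2$ together with Lemma~\ref{lem:rank_2nd_matrix} forces the two spans to have equal dimension, hence to coincide. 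Consistency of the multiplier equation is then immediate at any feasible point satisfying the rank hypothesis, with no appeal to the algorithm, and part 1 decouples from part 2. (A minor related slip: the multipliers should be taken for the constrained problem with objective $\|D-PL\|_W^2$, not for the penalized objective of \eqref{def:new}.) I recommend replacing your limit argument with this chain-rule/row-span argument.
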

The proof in given in the appendix.

\begin{proposition}
If a limit point $(P_*,L_*)$ of the sequence $\{(P,L)_j\}$ generated by Algorithm~\ref{alg:reg_slra}
is feasible, and $\rrank \mathcal{J}_{\widetilde{c}} (P_*,L_*) = mr+nr-r^2$,
then the limit point satisfies first order necessary conditions for \eqref{slra_mat}.
\end{proposition}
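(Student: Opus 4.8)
Here is how I would prove the final proposition.

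\medskip
\noindent
The plan is to obtain the statement as a consequence of Lemma~\ref{prop:prop2} together with the standard correspondence between the factorized formulation \eqref{def:new} (in the limit $\lambda\to\infty$) and the original problem \eqref{slra_mat}. A preliminary observation I would make is that the hypothesis $\rrank \mathcal{J}_{\widetilde{c}}(P_*,L_*)=mr+nr-r^2$ already forces $\rank{P_*}=\rank{L_*}=r$: since $\mathcal{J}_{\widetilde{c}}(P_*,L_*)=\Ss_\perp^\top\begin{bmatrix}L_*^\top\otimes I_m & I_n\otimes P_*\end{bmatrix}$ and left multiplication by $\Ss_\perp^\top$ cannot increase rank, Lemma~\ref{lem:rank_2nd_matrix} gives
$$mr+nr-r^2=\rrank\mathcal{J}_{\widetilde{c}}(P_*,L_*)\le\rrank\begin{bmatrix}L_*^\top\otimes I_m & I_n\otimes P_*\end{bmatrix}\le mr+nr-r^2,$$
so equality holds throughout and, again by Lemma~\ref{lem:rank_2nd_matrix}, $\rank{P_*}=\rank{L_*}=r$. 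In particular the feasible point $\widehat D_*:=P_*L_*$ has rank exactly $r$, so it lies on the smooth manifold $\mathcal{M}_r$ of rank-$r$ matrices, and near $(P_*,L_*)$ the map $(P,L)\mapsto PL$ is a submersion onto $\mathcal{M}_r$ (its differential $(\dot P,\dot L)\mapsto \dot P L_*+P_*\dot L$ has image $T_{\widehat D_*}\mathcal{M}_r$).

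\medskip
\noindent
Next I would invoke Lemma~\ref{prop:prop2}: under the present hypotheses there is a nonempty affine set $\mathcal{L}$ of vectors $\nu$ satisfying the KKT conditions at $(P_*,L_*)$ for the equality-constrained problem $\min_{P,L}\|p-\Ss^\dagger\vec{PL}\|^2_{\overline W}$ subject to $\widetilde c(P,L)=\mathbf{0}$, which, using $\|\widetilde c(P,L)\|_2^2=\|PL-{\cal P}_{\Scal}(PL)\|_F^2$ from \eqref{eq:projected_c}, is exactly problem \eqref{def:new} with $\lambda=\infty$. Fixing any $\nu^*\in\mathcal{L}$ and writing $g(P,L):=\|p-\Ss^\dagger\vec{PL}\|^2_{\overline W}$ for the limiting objective, this says $\nabla g(P_*,L_*)=\mathcal{J}_{\widetilde{c}}(P_*,L_*)^\top\nu^*$ and $\widetilde c(P_*,L_*)=\mathbf{0}$; i.e.\ $(P_*,L_*)$ is a KKT point of the factorized problem. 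I would stress that this is the substantive content and that it is Lemma~\ref{prop:prop2}, not Theorem~\ref{th:convergence}, that carries it: in this regime one has in general $mr+nr-r^2<mn-n_p$, so $\mathcal{J}_{\widetilde{c}}$ is row-rank deficient, the linear-independence-of-constraint-gradients assumption of Theorem~\ref{th:convergence} fails, and the existence of a limiting multiplier is instead extracted from the constant-rank statement of Lemma~\ref{prop:prop2}, applied via continuity of the Moore--Penrose pseudoinverse to the penalty-iteration bound $\|\nabla g((P,L)_j)+2\lambda_j\,\mathcal{J}_{\widetilde{c}}((P,L)_j)^\top\widetilde c((P,L)_j)\|_2\le\tau_j$ along the iterates $(P,L)_j\to(P_*,L_*)$.

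\medskip
\noindent
Finally I would translate back to \eqref{slra_mat}. Feasibility of $(P_*,L_*)$ means $\widehat D_*=P_*L_*$ is a structured matrix of rank $\le r$, hence feasible for \eqref{slra_mat}, and on the feasible set the two objectives coincide, $\|p-\Ss^\dagger\vec{P_*L_*}\|^2_{\overline W}=\|D-P_*L_*\|^2_W$. Because $(P,L)\mapsto PL$ is a submersion onto $\mathcal{M}_r$ near $(P_*,L_*)$, the chain rule carries the stationarity relation $\nabla g(P_*,L_*)=\mathcal{J}_{\widetilde{c}}(P_*,L_*)^\top\nu^*$ over to the first-order stationarity condition for \eqref{slra_mat} at $\widehat D_*$: the gradient of $\widehat D\mapsto\|D-\widehat D\|^2_W$ at $\widehat D_*$, restricted to $T_{\widehat D_*}\mathcal{M}_r$, is orthogonal to the tangent space of $\image{\Scal}$ there (equivalently, lies in the sum of the corresponding normal spaces). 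Since $\image{\Scal}$ is affine and $\widehat D_*$ is a smooth point of $\mathcal{M}_r$, this is precisely the first-order necessary condition for \eqref{slra_mat}, which is the claim.

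\medskip
\noindent
The part I expect to be the main obstacle is not in this proposition at all: it is already absorbed into Lemma~\ref{prop:prop2}, namely producing a genuine Lagrange multiplier at a feasible limit point where the constraint Jacobian is row-rank deficient, so that the usual KKT argument of the quadratic penalty method (Theorem~\ref{th:convergence}) does not apply verbatim. Once such a multiplier is in hand, the remaining steps --- the rank bookkeeping that forces $\rank{P_*}=\rank{L_*}=r$, the identification of $\widehat D_*$ as a smooth point of $\mathcal{M}_r$, and the transfer of stationarity through the smooth map $(P,L)\mapsto PL$ --- are routine.
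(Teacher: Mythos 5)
Your proof is correct and leans on the same key ingredient as the paper, namely Lemma~\ref{prop:prop2}, but it finishes differently. The paper's own proof is two lines: part~2 of Lemma~\ref{prop:prop2} (constancy of $\rrank \mathcal{J}_{\widetilde c}$ in a neighborhood of $(P_*,L_*)$) shows that the limit point satisfies the Constant Rank Constraint Qualification, and it then cites \cite{Andreani.etal10JoOTaA-Constant} for the fact that under CRCQ the KKT conditions --- whose satisfaction is guaranteed by part~1 of the lemma --- are the first-order necessary conditions for \eqref{slra_mat}. You instead use only part~1 (existence of a multiplier $\nu^*$ with $\mathcal{J}_{\widetilde c}^\top\nu^*=\nabla f$) and then transfer stationarity to the $\widehat D$-formulation by observing that the rank hypothesis forces $\rank{P_*}=\rank{L_*}=r$ (your bookkeeping via Lemma~\ref{lem:rank_2nd_matrix} is the same computation the paper uses inside the proof of Lemma~\ref{prop:prop2}), so that $(P,L)\mapsto PL$ is a submersion onto the smooth rank-$r$ manifold and the KKT relation becomes ``$\nabla\|D-\widehat D\|^2_W$ lies in $N_{\widehat D_*}\mathcal{M}_r+N\,\image{\Scal}$''. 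This is a legitimate, more self-contained and geometrically explicit route; what it buys is independence from the external CRCQ citation, and what it loses is the sharper statement: your final condition is the Lagrangian stationarity condition, which always implies the tangent-cone first-order necessary condition but is certified to \emph{be} the first-order necessary condition (i.e., one that every local minimizer must satisfy) only under a constraint qualification --- and that is precisely what part~2 of Lemma~\ref{prop:prop2}, which you never invoke, supplies. Two smaller remarks: your middle paragraph on recovering the multiplier from the penalty-iteration bound $\|\nabla g+2\lambda_j\mathcal{J}_{\widetilde c}^\top\widetilde c\|\le\tau_j$ via continuity of the pseudoinverse is correct but redundant, since Lemma~\ref{prop:prop2} obtains the multiplier directly from the consistency of $\mathcal{J}_{\widetilde c}^\top\nu=\nabla f$ at the limit point (the gradient factors through $\vec{PL}$); and you slightly conflate the two parts of the lemma when you attribute the existence of the multiplier to its ``constant-rank statement''. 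Neither affects the validity of the argument.
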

\begin{proof}
Lemma~\ref{prop:prop2} implies that if $\rrank \mathcal{J}_{\widetilde{c}} (P_*,L_*) = mr+nr-r^2$, the limit point $(P_*,L_*)$ satisfies the so-called Constant Rank Constraint Qualification \cite{Andreani.etal10JoOTaA-Constant}. In this case the KKT conditions are necessary for the point to be a local minimum of the constrained problem, see \cite{Andreani.etal10JoOTaA-Constant}.\hfill
\end{proof}

Finally, it may happen that $\rrank (\mathcal{J}_{\widetilde{c}} (P,L)) < \min(nm-n_p, mr+nr-r^2).$
In this case, we conjecture the following.
\begin{conjecture}
A feasible limit point satisfies the first order necessary conditions if
the rank of Jacobian is constant in the neighborhood of that point.
\end{conjecture}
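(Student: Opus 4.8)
The statement is a conjecture, so the plan is to describe how one would attempt a proof, reusing the penalty–method machinery behind Theorem~\ref{th:convergence} but discarding its linear–independence hypothesis. The guiding idea is that the stopping rule $\tau_j\to 0$, together with $\lambda_j\to\infty$, already produces an \emph{approximate} first order condition for the Lagrangian, and that constancy of $\rrank\calJ_{\tilde c}$ in a neighbourhood of the limit point is exactly what lets one extract from it a bounded, hence convergent (along a subsequence), sequence of Lagrange multipliers — which is all that is needed for a KKT/first order necessary point.

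Concretely, write $x=(P,L)$, $f(x)=\|p-\Ss^\dagger\vec{PL}\|^2_{\overline W}$ for the objective in \eqref{def:new}, and recall from \eqref{eq:projected_c} that the penalty term equals $\|\tilde c(x)\|_2^2$. First I would record that at the iterate $x_j=(P,L)_j$ produced at the end of the $j$-th inner loop the stopping criterion gives
$$\bigl\|\nabla f(x_j)+2\lambda_j\,\calJ_{\tilde c}(x_j)^\top \tilde c(x_j)\bigr\|_2<\tau_j .$$
Setting $\mu_j:=2\lambda_j\,\tilde c(x_j)$ and using that $\nabla f$ and $\calJ_{\tilde c}$ are polynomial (hence continuous) in the entries of $P$ and $L$, along the subsequence with $x_j\to x_*=(P_*,L_*)$ one gets $\calJ_{\tilde c}(x_j)^\top\mu_j\to-\nabla f(x_*)$. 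Decompose $\mu_j=u_j+v_j$ with $u_j\in\mathrm{range}(\calJ_{\tilde c}(x_j))=(\ker\calJ_{\tilde c}(x_j)^\top)^\perp$ and $v_j\in\ker\calJ_{\tilde c}(x_j)^\top$; then $\calJ_{\tilde c}(x_j)^\top u_j=\calJ_{\tilde c}(x_j)^\top\mu_j\to-\nabla f(x_*)$, so only the component $u_j$ matters.

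Now the constant–rank hypothesis enters. If $\rrank\calJ_{\tilde c}(x)\equiv\rho$ on a neighbourhood of $x_*$, then, since singular values depend continuously on $x$, the $\rho$-th singular value $\sigma_\rho(\calJ_{\tilde c}(x))$, which is positive at $x_*$, stays $\ge\delta>0$ on a smaller neighbourhood, whence $\|u_j\|_2\le\delta^{-1}\|\calJ_{\tilde c}(x_j)^\top u_j\|_2$ is bounded; passing to a further subsequence, $u_j\to u_*$, and in the limit $\calJ_{\tilde c}(x_*)^\top u_*=-\nabla f(x_*)$. Together with the assumed feasibility $\tilde c(x_*)=\mathbf{0}$, the pair $(x_*,\nu_*)$ with $\nu_*:=u_*$ satisfies $\nabla f(x_*)+\calJ_{\tilde c}(x_*)^\top\nu_*=\mathbf{0}$ and $\tilde c(x_*)=\mathbf{0}$, i.e.\ the first order (KKT/Lagrange) conditions of $\min_x f(x)$ subject to $\tilde c(x)=\mathbf{0}$. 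Since $\tilde c(x_*)=\mathbf{0}$ means $P_*L_*=\mathcal P_\Scal(P_*L_*)$ is structured of rank $\le r$, translating through $\Scal(\Ss^\dagger\vec{P_*L_*})=P_*L_*$ and the chain rule shows $\widehat D=P_*L_*$ satisfies first order necessary conditions for \eqref{slra_mat}, as in Remark~\ref{rem:iff_condition_independence} and Theorem~\ref{th:convergence} but without requiring independence of the constraint gradients. (Equivalently, one may invoke Janin's constant rank constraint qualification \cite{Andreani.etal10JoOTaA-Constant}, exactly as in the proof of the preceding proposition; the two routes coincide here.)

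The step I expect to be the main obstacle is the uniform lower bound $\sigma_\rho(\calJ_{\tilde c}(x_j))\ge\delta>0$ for large $j$, where $\rho=\rrank\calJ_{\tilde c}(x_*)$: this is precisely what can fail when the iterates approach $x_*$ through a region of strictly larger rank, in which case the penalty multipliers $\mu_j$ may diverge and no convergent subsequence exists — and it is exactly this phenomenon that the constant–rank-in-a-neighbourhood hypothesis is designed to exclude, which is why the conjecture is stated with that assumption rather than with constant rank only at the point. A secondary, routine point is the passage from the KKT conditions of the $(P,L)$-parametrized reformulation to first order conditions for \eqref{slra_mat} itself; this is the correspondence used implicitly throughout Section~\ref{sec:algorithm} and, on the feasible set where $PL$ is structured of rank $\le r$, presents no difficulty.
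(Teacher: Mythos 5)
The statement you are proving is labelled a \emph{conjecture} in the paper: the authors explicitly leave it open (``In this case, we conjecture the following''), so there is no proof of theirs to compare against. Your sketch is therefore genuinely new relative to the paper, and its core is sound. The argument --- record the $\tau_j$-approximate stationarity of the penalized objective at the inner-loop iterates, set $\mu_j = 2\lambda_j\tilde c(x_j)$, split $\mu_j$ into its components in $\mathrm{range}(\calJ_{\tilde c}(x_j))$ and in $\ker(\calJ_{\tilde c}(x_j)^\top)$, and use the constant-rank hypothesis to keep the smallest \emph{nonzero} singular value of $\calJ_{\tilde c}$ bounded away from zero along the subsequence, hence to bound and extract a convergent multiplier estimate $u_j\to\nu_*$ --- is exactly the right mechanism. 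It is, in modern language, the observation that penalty-method limit points are approximate-KKT points and that constant rank of the Jacobian upgrades approximate KKT to KKT for equality constraints; you also correctly identify why constancy of the rank \emph{in a neighborhood} (rather than just at $x_*$) is the indispensable ingredient, since rank can only jump up near $x_*$ and any such jump sends $\sigma_{\rho_j}(x_j)\to 0$ and destroys the bound on $u_j$.

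Two caveats. First, your parenthetical claim that one may ``equivalently'' invoke the constant rank constraint qualification as in the paper's preceding proposition is not quite accurate: CRCQ, used as a constraint qualification, only guarantees that KKT conditions hold at \emph{local minimizers}, whereas a feasible limit point of the penalty iteration need not be a local minimizer; your direct multiplier-extraction argument is the one that actually closes this gap (and, incidentally, is more rigorous on this point than the paper's own proof of its proposition). Second, your argument establishes stationarity of the $(P,L)$-parametrized equality-constrained problem $\min f$ subject to $\tilde c = 0$; the translation to ``first order necessary conditions for \eqref{slra_mat}'' is left informal, but this matches the level of precision of Theorem~\ref{th:convergence} and the surrounding text, where that identification is made throughout. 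With these understood, your sketch is a plausible resolution of the conjecture rather than a gap-free proof, but the one step you flag as the main obstacle is in fact handled correctly by the hypothesis, and I see no step that would fail.
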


\section{Numerical experiments}
\label{sec:examples}

In this section we apply the proposed algorithm on three different problems, namely, system identification, finding a common divisor of polynomials (with noisy coefficients), and symmetric tensor decomposition. Although these problems arise in completely different fields, are essentially different, and require different features (weights, fixed elements, or missing elements), we demonstrate the consistently good performance of Algorithm~\ref{alg:reg_slra}.

\subsection{Related algorithms}
In our {\sc Matlab} simulations, we compare Algorithm~\ref{alg:reg_slra} with Cadzow's algorithm~\cite{CadWil90} and the kernel-based algorithm {\tt slra} \cite{slra-software}, which also aim to solve problem (\ref{slra_mat}).
The former is popular in signal processing applications due to its simplicity and the latter has been recently extended to work with missing data.
We next briefly summarize the main ideas behind these algorithms.

Cadzow's algorithm \cite{CadWil90} consists of repeating the following two main steps
\begin{itemize}
	\item ``project'' the current structured approximation to a low-rank matrix, e.g., using truncated SVD,
	\item project the current low-rank matrix to the space of structured matrices.
\end{itemize}
As shown in \cite{DMo94}, Cadzow's algorithm 
converges to a structured matrix of rank $r$, but not necessarily to a stationary point of the optimization problem.

The {\tt slra} algorithm~\cite{slra-efficient}
is based on the kernel representation of the rank constraint, i.e.,
$$\rank{\widehat{D}} \le r \iff R\widehat{D} = \mathbf{0}\mbox{ for some full row rank matrix }R\in \mathbb{R}^{(m-r)\times m}$$
and uses the variable projection method \cite{varpro}, i.e., reformulation of the problem as inner and outer optimization, where the inner minimization admits an analytic solution. The outer problem is a nonlinear least squares problem and is solved by standard local optimization methods, e.g., the Levenberg--Marquardt method \cite{M63}. Generalization to problems with fixed and missing data is presented in \cite{slra-ext}. 
For general affine structures, {\tt slra} has a restriction on the possible values of the reduced rank $r$. Algorithm~\ref{alg:reg_slra} overcomes this limitation.

\subsection{Autonomous system identification}
\label{ex:sysid}

In this section, we will
use the fact that the proposed algorithm can handle {\it weighted} norms and {\it missing} elements. The considered matrices have Hankel structure.

\subsubsection*{Background}
In system theory, a discrete-time autonomous linear time-invariant dynamical system~\cite{lra-book} can be defined by a difference equation
\begin{equation}
	{\theta_0}\, y(t) + {\theta_1}\, y(t+1) + \cdots + {\theta_\ell}\, y(t+\ell) = 0, 
	\quad \mbox{for } t = 1,\ldots, T-\ell,
\label{eq:diff}
\end{equation}
where $\ell$ is the order of the system
and $\theta=\begin{bmatrix}\theta_0  & \theta_1 & \cdots  & \theta_\ell \end{bmatrix}\in\R^{\ell+1}$ is a non-zero vector of model parameters.
The problem of system identification is: estimate $\theta$, 
given a response $y = [y(1),\ldots, y(T)] \in \R^{T}$ of the system.

The difference equation \eqref{eq:diff} can equivalently be represented as
\begin{equation}
	{\begin{bmatrix}\theta_0  & \theta_1 & \cdots  & \theta_\ell \end{bmatrix}}
	\underbrace{\begin{bmatrix}
	y(1) & y(2) & y(3) & \ldots & y(T-\ell)\\
	y(2) & y(3) & y(4) & \iddots & \\
	y(3) & \iddots &  & & \vdots\\
	\vdots & \iddots & \iddots &  & \\
	y(\ell+1) & y(\ell+2) & y(\ell+3) &\ldots & y(T)
	\end{bmatrix}}_{{\cal H}_{\ell+1}(y)}=\mathbf{0}.
\label{eq:sysid_matrixeq}
\end{equation}
It follows from \eqref{eq:sysid_matrixeq} that the Hankel matrix ${\cal H}_{\ell+1}(y)$ is rank deficient, i.e., 
$$\rank{{\cal H}_{\ell+1}(y)}\leq \ell.$$ 

In the more realistic noisy case however, \eqref{eq:diff} and \eqref{eq:sysid_matrixeq} hold only approximately. 
The problem of identifying the system then reduces to finding a rank-$\ell$ approximation of ${\cal H}_{\ell+1}(y).$
The parameter $\theta$ can then be computed from the null space of the obtained approximation.

If enough samples are provided, which is usually the case in engineering applications, another possible reformulation of \eqref{eq:diff} is the following
\begin{equation}
	{\begin{bmatrix}\theta_0  & \theta_1 & \cdots  & \theta_\ell & 0\\[2mm]
		0 & \theta_0  & \theta_1 & \cdots  & \theta_\ell \end{bmatrix}}
	\underbrace{\begin{bmatrix}
	y(1) & y(2) & y(3) & \ldots & y(T-\ell-1)\\
	y(2) & y(3) & y(4) & \iddots & \\
	y(3) & \iddots &  & & \vdots\\
	\vdots & \iddots & \iddots &  & \\
	y(\ell+2) & y(\ell+3) & y(\ell+4) &\ldots & y(T)
	\end{bmatrix}}_{{\cal H}_{\ell+2}(y)}=\mathbf{0}.
\label{eq:sysid_matrixeq2}
\end{equation}
Compared to \eqref{eq:sysid_matrixeq}, the matrix ${\cal H}_{\ell+2}(y)$ in \eqref{eq:sysid_matrixeq2} has more rows and less columns but its rank is still at most $\ell$. 
The rank deficiency of ${\cal H}_{\ell+2}(y)$ is however at least $2$ since ${\cal H}_{\ell+2}(y)$ has $\ell+2$ rows. 
(This can also be concluded from the fact that there are now $2$ linearly independent vectors in the null space of the matrix.)
We can continue reshaping by adding rows and removing columns, e.g., until we get a square matrix ${\cal H}_{(T+1)/2}(y)$ if $T$ is odd or an almost square matrix ${\cal H}_{T/2}(y)$ if $T$ is even. This could be useful since there are indications that truncated SVD of an (almost) square Hankel matrix relates to a better (initial) noise reduction. Note that for large $T$ ($T\gg 2\ell$), the matrix ${\cal H}_{(T+1)/2}(y)$ (${\cal H}_{T/2}(y)$) would have low rank compared to its dimensions ($\ell \ll T/2$), which is the case the proposed algorithm aims for.

\subsubsection*{Example: system identification in Frobenius norm}
In this example, we will use the fact that the proposed algorithm can work with {\it weighted} norms. In particular, in order to approximate a structured matrix $\Scal(p)$ with a low-rank structured matrix $\Scal(\hat p)$ in Frobenius norm, i.e.,
$$\min_{\hat p} \|\Scal(p) - \Scal(\hat p)\|^2_F \,\quad\mbox{subject to }\quad \rank{\Scal(\hat p)} \le r$$
we need to take $\overline W$ from \eqref{slra} and \eqref{def:new} to be a diagonal matrix with weights equal to the number of occurrences of each structure parameter $p_i,\,i=1,\ldots, n_p.$ 
We consider the Frobenius norm as a distance measure between the data matrix and the approximation to facilitate the comparison with Cadzow's algorithm. 
In the next example, we illustrate the performance of the proposed algorithm with respect to the $2$-norm $\|p-\hat p\|^2_2$.

The considered true (noiseless) signal $y_0$ is the sum of the following two exponentially modulated cosines
\begin{equation}
	\begin{array}{rcl}
		y_0(t) & = & y_{0,1}(t) + y_{0,2}(t),\\[1.5mm]
		y_{0,1}(t) & = & \quad 0.9^t\, \cos(\frac{\pi}{5}\,t),\\[1.5mm]
		y_{0,2}(t) & = & \frac{1}{5}\, 1.05^t \cos(\frac{\pi}{12}\,t + \frac{\pi}{4}),
	\end{array}
\label{eq:sysid_y0}
\end{equation}
$t=1,\ldots, 50$, shown in Figure~\ref{fig:sysid_components}. 
\begin{figure}[htb]%
\centering
	\psfrag{cos 1}{$y_{0,1}$}
	\psfrag{cos 2}{$y_{0,2}$}
	\psfrag{x}{$t$}
	\psfrag{y}{\hspace*{-1.1cm}$y_{0,1}(t),\, y_{0,2}(t)$}
	\includegraphics[width=0.7\columnwidth]{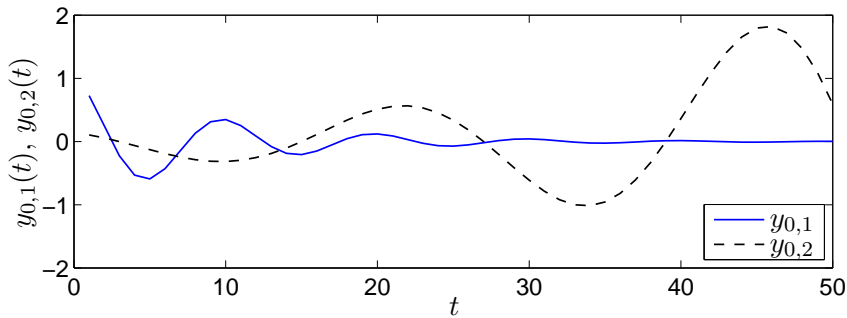}%
\caption{True components in the system identification examples.}%
\label{fig:sysid_components}%
\end{figure}
The rank of the corresponding Hankel matrix is $4$, i.e., $\rank{{\cal H}_m(y_0)} =4$, for $m=5,6,\ldots, 46$. We added noise in the following way
\begin{equation}
	y(t) = y_0(t) + 0.2\, \frac{e(t)}{\|e\|_2}\, \|y_0(t)\|_2,
\label{eq:sysid_y}
\end{equation}
where $e(t)$ were drawn independently from the normal distribution with zero mean and unit standard deviation. The added noise increases the rank of ${\cal H}_m(y_0)$, so rank-$4$ approximation has to be computed. We denote the approximations by $\hat y$.

We ran Algorithm~\ref{alg:reg_slra}, {\tt slra} and Cadzow's algorithm. 
To facilitate the comparison with {\tt slra}, we first set $m=r+1$ (i.e., $m=5$).
The initial approximation was obtained using the truncated SVD. 
Since {\tt slra} is sensitive to the initial approximation, in addition to its default initialization, we ran {\tt slra} starting from the solution obtained by Kung's method \cite{kung78} (row ``Kung $\rightarrow$ {\tt slra}'' in Table~\ref{sysid_Fro_comparison_rectangular}), which is a heuristic method for Hankel structured low-rank approximation.

After $1000$ iterations, Cadzow's algorithm still did not converge and the rank of its structured approximation was $5$ instead of $4$, with smallest singular value of the approximation $0.0027$. 
The solution of Algorithm~\ref{alg:reg_slra} had the smallest approximation error, see Table~\ref{sysid_Fro_comparison_rectangular}.
\begin{table}[htb]%
\centering
\caption{Numerical errors of the initial approximation (by SVD), Cadzow's algorithm, {\tt slra}, Kung's heuristic algorithm, {\tt slra} initialized with Kung's algorithm's solution, and the proposed algorithm (Algorithm~\ref{alg:reg_slra}), for the example \eqref{eq:sysid_y0}--\eqref{eq:sysid_y} with $m=5$.}
\begin{tabular}{l|r|r|l}
	 & $\|\Scal(y) - \Scal(\hat y)\|^2_F$ & $\|\Scal(y_0) - \Scal(\hat y)\|^2_F$ & Remarks\\
	\hline
	{\tt init.approx.} & $37.7107$ & $36.1934$ & \\
	\hline
	Cadzow & $(4.4916)$ & $(1.0740)$ & {\bf incorrect rank}\\
	\hline
	{\tt slra} & $11.3892$ & $7.0164$ & \\
	\hline
	{\tt Kung}~\cite{kung78} & $ 1.6526\cdot10^{13}$ & $1.6526\cdot 10^{13}$ & heuristic\\
	\hline
	{\tt Kung} $\rightarrow$ {slra} & ${11.3892}$ & $7.0172$ & \\
	\hline
	Algorithm~\ref{alg:reg_slra} & $\mathbf{4.6112}$ & $ \mathbf{0.4848}$ & $\frac{\|PL - {\cal P}_{\Scal}({PL})\|^2_F}{\|PL\|^2_F} < 10^{-24}$
\end{tabular}
\label{sysid_Fro_comparison_rectangular}
\end{table}
The error of the initial approximation reported in Table~\ref{sysid_Fro_comparison_rectangular} is computed by finding the closest structured matrix having the same kernel as the truncated SVD approximation. This is achieved by solving the inner minimization problem~\cite[eq. (f(R))]{slra-software}.
The computed trajectories (for one run) are presented in Figure~\ref{fig:sysid_Fro_rectangular}. 
\begin{figure}[h!]%
\centering
	\psfrag{x}{$t$}
	\psfrag{y}{\hspace*{-1.5cm}$y,\, y_{0},\, \hat y_{A1},\, \hat y_{slra},\, \hat y_{C}$}	\includegraphics[width=0.7\columnwidth]{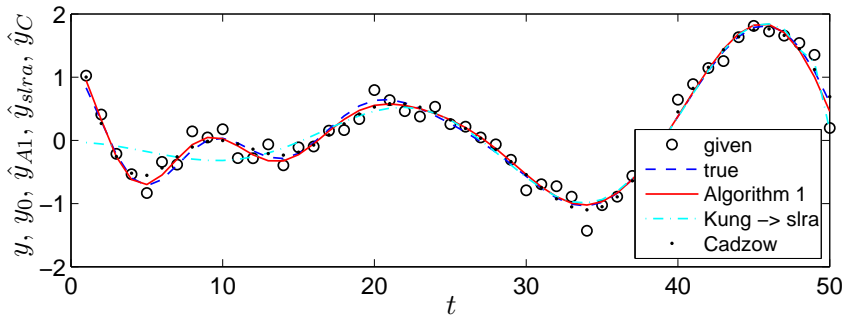}
\caption{Noisy data $y$, true data $y_0$ and the trajectories obtained from Algorithm~\ref{alg:reg_slra} ($\hat y_{A1}$), {\tt slra} ($\hat y_{slra}$), and Cadzow ($\hat y_{C}$), for the example \eqref{eq:sysid_y0}--\eqref{eq:sysid_y} with $m=5$. The computations are with respect to the Frobenius norm.}%
\label{fig:sysid_Fro_rectangular}%
\end{figure}

In a second experiment, 
we considered the almost square $25\times 26$ matrix ${\cal H}_{25}$ and ran the algorithms again.
The initial approximation was again obtained using the truncated SVD. 
Note that {\tt slra} can only be run on ${\cal H}_{5}$ but can profit from the initial approximation from ${\cal H}_{25}$, by running {\tt slra} after Kung's method.
The result obtained by {\tt slra} when initialized with Kung's method and the result obtained by Algorithm~\ref{alg:reg_slra} were similar to each other. Cadzow's algorithm also performed well in this experiment.
The numerical errors are presented in Table~\ref{tab:sysid_Fro_comparison_square}.
The computed trajectories (for one run) are presented in Figure~\ref{fig:sysid_Fro_square}. 
\begin{table}[htb]%
\centering
\caption{Numerical errors of Cadzow's algorithm, Kung's heuristic algorithm, {\tt slra} initialized with Kung's algorithm's solution, and the proposed algorithm (Algorithm~\ref{alg:reg_slra}), for the example \eqref{eq:sysid_y0}--\eqref{eq:sysid_y} with $m=25$.}
\begin{tabular}{l|r|r|l}
	 & $\|\Scal(y) - \Scal(\hat y)\|^2_F$ & $\|\Scal(y_0) - \Scal(\hat y)\|^2_F$ & Remarks\\
	\hline
	Cadzow & $12.5219$ & $ 1.4715$ &\\
	\hline
	{\tt Kung}~\cite{kung78} & $19.7567$ & $ 10.2969$ & heuristic\\
	\hline
	{\tt Kung} $\rightarrow$ {slra} & $\mathbf{12.5038}$ & $1.3495$ & \\
	\hline
	Algorithm~\ref{alg:reg_slra} & $12.5041$ & $\mathbf{1.3470}$ & $\frac{\|PL - {\cal P}_{\Scal}({PL})\|^2_F}{\|PL\|^2_F} < 10^{-22}$
\end{tabular}
\label{tab:sysid_Fro_comparison_square}
\end{table}
\begin{figure}[h!]%
\centering
	\psfrag{x}{$t$}
	\psfrag{y}{\hspace*{-1.5cm}$y,\, y_{0},\, \hat y_{A1},\, \hat y_{slra},\,\hat y_{C}$}	\includegraphics[width=0.7\columnwidth]{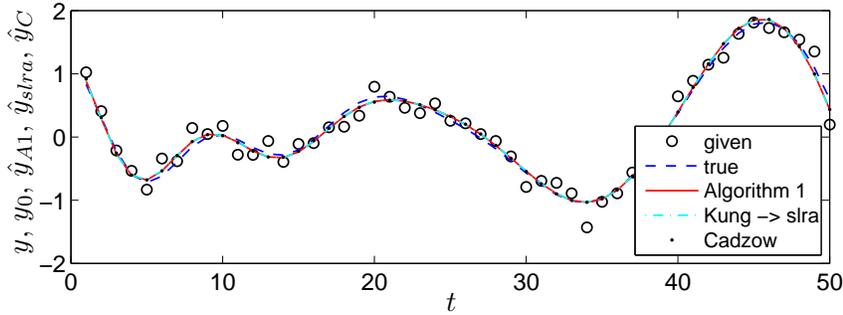}
\caption{Noisy data $y$, true data $y_0$ and the trajectories obtained from Algorithm~\ref{alg:reg_slra} ($\hat y_{A1}$), {\tt slra} ($\hat y_{slra}$), and Cadzow ($\hat y_{C}$), for the example \eqref{eq:sysid_y0}--\eqref{eq:sysid_y} with $m=25$. The computations are with respect to the Frobenius norm.}%
\label{fig:sysid_Fro_square}%
\end{figure} 

Different realizations of the example lead to slightly different numerical results. However, the main conclusions of this example stay the same.

\subsubsection*{Example: system identification with missing data}
In this example, we illustrate the fact that the proposed algorithm can work with matrices having {\it missing} elements. 
We continue with the above example but since Cadzow's algorithm cannot be applied to the missing data case directly, this time the objective function is with respect to the more standard parameter norm $\|p-\hat p\|^2_2$. 

Since the rank of the true Hankel matrix is $4$, standard algorithms need at least $5$ ($5=4+1$) consecutive data points for identification. However, in the example below, we removed every $5$th data point, so these algorithms cannot be applied directly. Algorithm~\ref{alg:reg_slra} and {\tt slra}, however, can be used.
The initial approximation for the missing values was provided by averaging their two neighboring data points, after which the initial approximation for the algorithms was obtained by the truncated SVD. 

We ran again two experiments with $m=5$ and $m=25$, respectively.
The numerical errors are presented in Table~\ref{tab:sysid_misssing_comparison_rectangular} and
Table~\ref{tab:sysid_misssing_comparison_square}, respectively.
The obtained trajectories from Algorithm~\ref{alg:reg_slra} and {\tt slra} (initialized with the solution of Kung's algorithm) are given in Figure~\ref{fig:sysid_missing_rectangular} and Figure~\ref{fig:sysid_missing_square}, respectively.
\begin{table}[h!]%
\centering
\caption{Numerical errors of the initial approximation (by SVD), {\tt slra}, Kung's heuristic algorithm, {\tt slra} initialized with Kung's algorithm's solution, and the proposed algorithm (Algorithm~\ref{alg:reg_slra}), for the example \eqref{eq:sysid_y0}--\eqref{eq:sysid_y} with $m=5$ and missing data. $\overline W$ is a diagonal matrix with diagonal consisting of zeros at the positions corresponding to the missing elements and ones otherwise.}
\hspace*{-1mm}\begin{tabular}{l|r|r|r|l}
	 & $\|y - \hat y\|^2_{\overline W}$ & $\|y_0 - \hat y\|^2_{I-\overline W}$ & $\|y_0 - \hat y\|_2^2$ & Remarks\\
	\hline
	{\tt init.approx.} & $8.4295$ & $1.9481$ & $10.2588$ \\
	\hline
	{\tt slra} & $ 8.4123$  & $1.9447$ & $10.2386$\\
	\hline
	{\tt Kung}~\cite{kung78} & $5.5200\cdot\! 10^{5}$ & $9.0292\cdot\! 10^4$ & $ 6.4178\cdot\! 10^5$ & heuristic\\
	\hline
	{\tt Kung} $\rightarrow$ {\tt slra} & $8.3232$ &  $ 2.3540$ &  $10.3141$ & \\
	\hline
	Algorithm~\ref{alg:reg_slra} &  ${\bf 0.9027}$ & ${\bf 0.0512}$ &  ${\bf 0.1551}$ & $\!\frac{\|PL - {\cal P}_{\Scal}({PL})\|^2_F}{\|PL\|^2_F} < \!10^{-25}$
\end{tabular}
\label{tab:sysid_misssing_comparison_rectangular}
\end{table}
\begin{figure}[h!]%
\centering
	\psfrag{x}{$t$}
	\psfrag{y}{\hspace*{-1.1cm}$y,\, y_{0},\, \hat y_{A1},\, \hat y_{slra}$}
	\includegraphics[width=0.7\columnwidth]{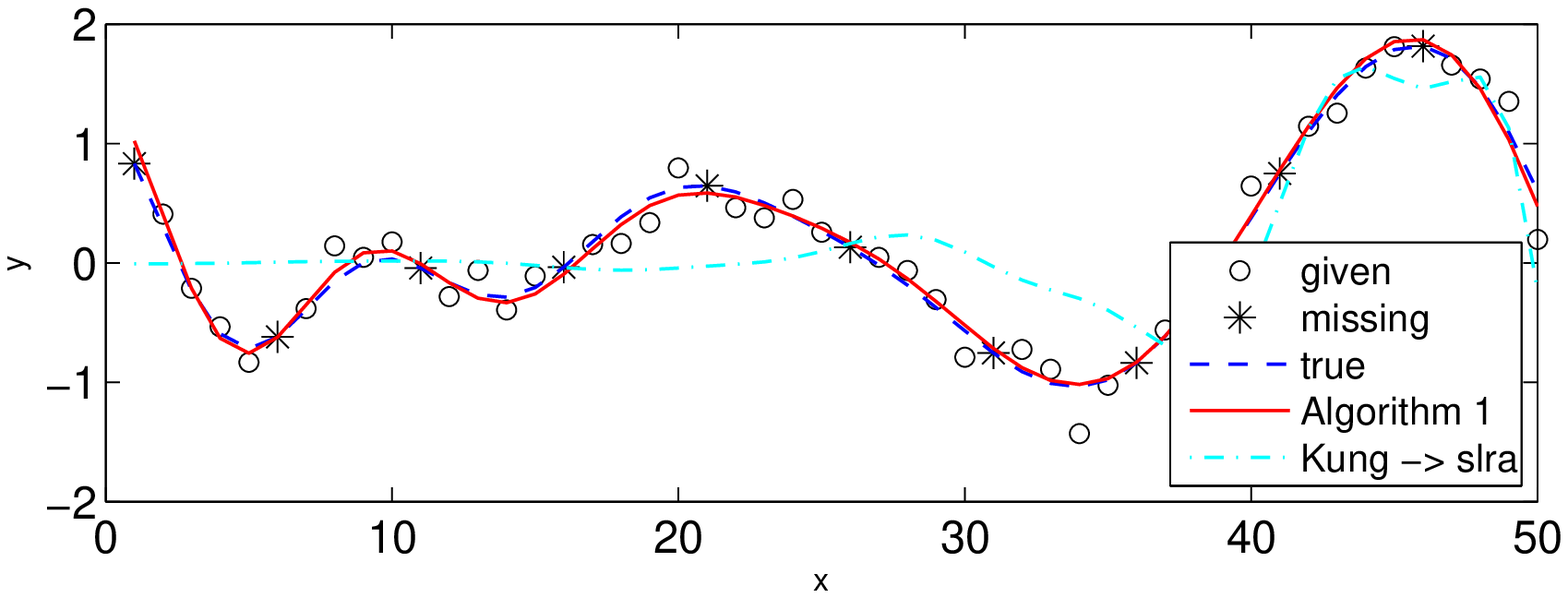}
\caption{Noisy data (subset of $y$), missing data (subset of $y_0$), true data $y_0$ and the trajectories obtained from Algorithm~\ref{alg:reg_slra} ($\hat y_{A1}$) and {\tt slra} ($\hat y_{slra}$) for the example \eqref{eq:sysid_y0}--\eqref{eq:sysid_y} with $m=5$ and missing data.}%
\label{fig:sysid_missing_rectangular}%
\end{figure}
\begin{table}[h!]%
\centering
\caption{Numerical errors of Kung's heuristic algorithm, {\tt slra} initialized with Kung's algorithm's solution, and the proposed algorithm (Algorithm~\ref{alg:reg_slra}), for the example \eqref{eq:sysid_y0}--\eqref{eq:sysid_y} with $m=25$ and missing data. $\overline W$ is a diagonal matrix with diagonal consisting of zeros at the positions corresponding to the missing elements and ones otherwise.}
\begin{tabular}{l|r|r|r|l}
	 & $\|y - \hat y\|^2_{\overline W}$ & $\|y_0 - \hat y\|^2_{I-\overline W}$ & $\|y_0 - \hat y\|_2^2$ & Remarks\\
	\hline
	{\tt Kung}~\cite{kung78} & $1.2480$ & $ 0.2722$ & $0.6935$ & heuristic\\
	\hline
	{\tt Kung} $\rightarrow$ {\tt slra} & $0.9228$ &  $0.1404$ &  $0.2750$ & \\
	\hline
	Algorithm~\ref{alg:reg_slra} &  ${\bf 0.9033}$ & ${\bf 0.0515}$ &  ${\bf 0.1429}$ & $\frac{\|PL - {\cal P}_{\Scal}({PL})\|^2_F}{\|PL\|^2_F} < 10^{-23}$
\end{tabular}
\label{tab:sysid_misssing_comparison_square}
\end{table}
\begin{figure}[h!]%
\centering
	\psfrag{x}{$t$}
	\psfrag{y}{\hspace*{-1.1cm}$y,\, y_{0},\, \hat y_{A1},\, \hat y_{slra}$}
	\includegraphics[width=0.7\columnwidth]{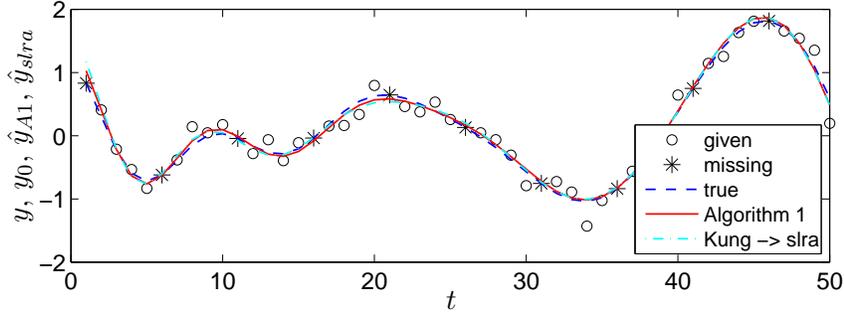}
\caption{Noisy data (subset of $y$), missing data (subset of $y_0$), true data $y_0$ and the trajectories obtained from Algorithm~\ref{alg:reg_slra} ($\hat y_{A1}$) and {\tt slra} ($\hat y_{slra}$) for the example \eqref{eq:sysid_y0}--\eqref{eq:sysid_y} with $m=25$ and missing data.}%
\label{fig:sysid_missing_square}%
\end{figure}
For $m=5$, Algorithm~\ref{alg:reg_slra} had the smallest approximation error. For $m=25$, the results of Algorithm~\ref{alg:reg_slra} and {\tt slra} (initialized with Kung's method) were similar, although Algorithm~\ref{alg:reg_slra} was still better.

{Different realizations of the example lead to slightly different numerical results. We also observed that for smaller noise variance, 
{\tt slra} and Algorithm~\ref{alg:reg_slra} compute the same solution, but for higher values of the noise 
Algorithm~\ref{alg:reg_slra} is generally more robust.}

\subsection{Approximate common divisor}
\label{ex:gcd}

Another application of structured low-rank approximation and thus of Algorithm~\ref{alg:reg_slra} is finding approximate common divisors of a set of polynomials. Existence of a nontrivial common divisor is a nongeneric property. Given a noisy observation of the polynomials' coefficients (or due to round-off errors in storing the exact polynomials coefficients in a finite precision arithmetic), the polynomials have a nontrivial common divisor with probability zero. Assuming that the noise free polynomials have a common divisor of a known degree, our aim in the approximate common divisor problem is to estimate the common divisor from the noisy data. The problem can be formulated and solved as Sylvester structured low-rank approximation problems, see \cite{slra-agcd}. Since the Sylvester matrix has fixed zero elements, in this example, we use the feature of Algorithm~\ref{alg:reg_slra} to work with structured matrices having {\em fixed elements}.

\subsubsection*{Background}

For a polynomial
\begin{equation*}
	a(z) \,\,=\,\, a_0 + a_1\, z + \cdots + a_{n}\, z^{n},
\end{equation*}
define the multiplication matrix
\begin{equation*}
	S_k(a)=\left.\begin{bmatrix}
    a_0 & a_1 & \cdots & a_{n} & & \mathbf{0}\\
    & \ddots & \ddots & &\ddots & \\
    \mathbf{0} & & a_0 & a_1 & \cdots & a_{n}
    \end{bmatrix}\right\}k\quad .
\end{equation*}
We consider three polynomials $a$, $b$, and $c$ and for simplicity let they be of the same degree~$n$. A basic result in computer algebra, see, e.g., \cite{KL98}, is that $a$, $b$, and $c$ have a nontrivial common divisor if and only if the generalized Sylvester matrix
\begin{equation}
\begin{bmatrix}
	S_{n}(b) & S_{n}(c)\\[1mm]
	S_{n}(a) & \mathbf{0}\\[1mm]
	\mathbf{0} & S_{n}(a)
\end{bmatrix}
\label{eq:Sylvester3a}
\end{equation}
has rank at most $3n-1$. Alternatively, one can consider another type of generalized Sylvester matrix~\cite{gen-sylvester2}
\begin{equation}
\begin{bmatrix}
	S_{n}(a) \\[1mm] S_{n}(b) \\[1mm] S_{n}(c)
\end{bmatrix},
\label{eq:Sylvester4}
\end{equation}
whose rank deficiency is equal to the degree of the greatest common divisor of $a$, $b$, and $c$. 
Formulation (\ref{eq:Sylvester4}) is more compact than the one in (\ref{eq:Sylvester3a}), especially if the number of polynomials is large.
These results are generalizable for arbitrary number of polynomials ($2, 3, \ldots$) of possibly different degrees and arbitrary order of the required common divisor.

In the case of inexact coefficients, the generalized Sylvester matrices are generically of full rank. The problem of finding the approximate common divisor is then transformed to the problem of approximating the matrix in (\ref{eq:Sylvester3a}) or in (\ref{eq:Sylvester4}) by low-rank matrices with the same structure. This can be done with Algorithm~\ref{alg:reg_slra} and with the alternative method {\tt slra}. For simplicity, in our example we take three polynomials of degree $2$ and desired (greatest) common divisor of order one (one common root). 

\subsubsection*{Example}
Let
\begin{equation*}
\begin{array}{ccccl}
	a(z) & = & 5 - 6z + z^2 & = & (1 - z)\,(5 - z),\\[1mm]
	b(z) & = & 10.8 - 7.4z + z^2 & = & (2 - z)\,(5.4 - z),\\[1mm]
	c(z) & = & 15.6 - 8.2z   +  z^2 & = & (3 - z)\,(5.2 - z).
\end{array}
\end{equation*}
Aiming at a common divisor of degree one, we approximate (\ref{eq:Sylvester3a}) and (\ref{eq:Sylvester4}) with, respectively, rank-$5$ and rank-$3$ matrices. The obtained solution with Algorithm~\ref{alg:reg_slra}, applied on \eqref{eq:Sylvester4} is
\begin{equation*}
\begin{array}{ccccc}
	\hat a(z) & = & 4.9991 -6.0046\,z + 0.9764\,z^2
	    & = & 0.9764\,\,(0.9928 - z)\,\textcolor{blue}{(5.1572 - z)},\\[1mm]
	\hat b(z) & = & 10.8010  -7.3946\,z + 1.0277\,z^2
	    & = & 1.0277\,\,(2.0378 - z)\,\textcolor{blue}{(5.1572 - z)},\\[1mm]
	\hat c(z) & = & 15.6001  -8.1994\,z +  1.0033\,z^2
	    & = & 1.0033\,\,(3.0149 - z)\,\textcolor{blue}{(5.1572 - z)},
\end{array}
\end{equation*}
with a common root $5.1572$. The approximation error was
$\|p-\hat p\|^ 2 = 0.0014,$ where $p$ is the vector of $9$ initial coefficients ($p=\begin{bmatrix}a_0 & a_1 & a_2 & \cdots & c_2\end{bmatrix}$) and 
$\hat p$ is the vector of $9$ coefficients of the approximations
($\hat p = \begin{bmatrix}\hat a_0 & \hat a_1 & \hat a_2 & \cdots & \hat c_2\end{bmatrix}$). 
	The roots of the original and approximating polynomials, as well as the polynomials themselves are plotted in Figure~\ref{fig:gcd_v2}.
\begin{figure}[htb]%
\subfigure[Locations of the roots.]{
    \psfrag{a}{${a}$}
    \psfrag{b}{${b}$}
    \psfrag{c}{${c}$}
    \psfrag{ah}{${\hat a}$}
    \psfrag{bh}{${\hat b}$}
    \psfrag{ch}{${\hat c}$}
    \psfrag{x}{\raisebox{-1mm}{roots}}
    \psfrag{y}{\hspace*{-1cm}$a\quad\quad\, b\quad\quad\, c$}
    \includegraphics[width=0.42\columnwidth]{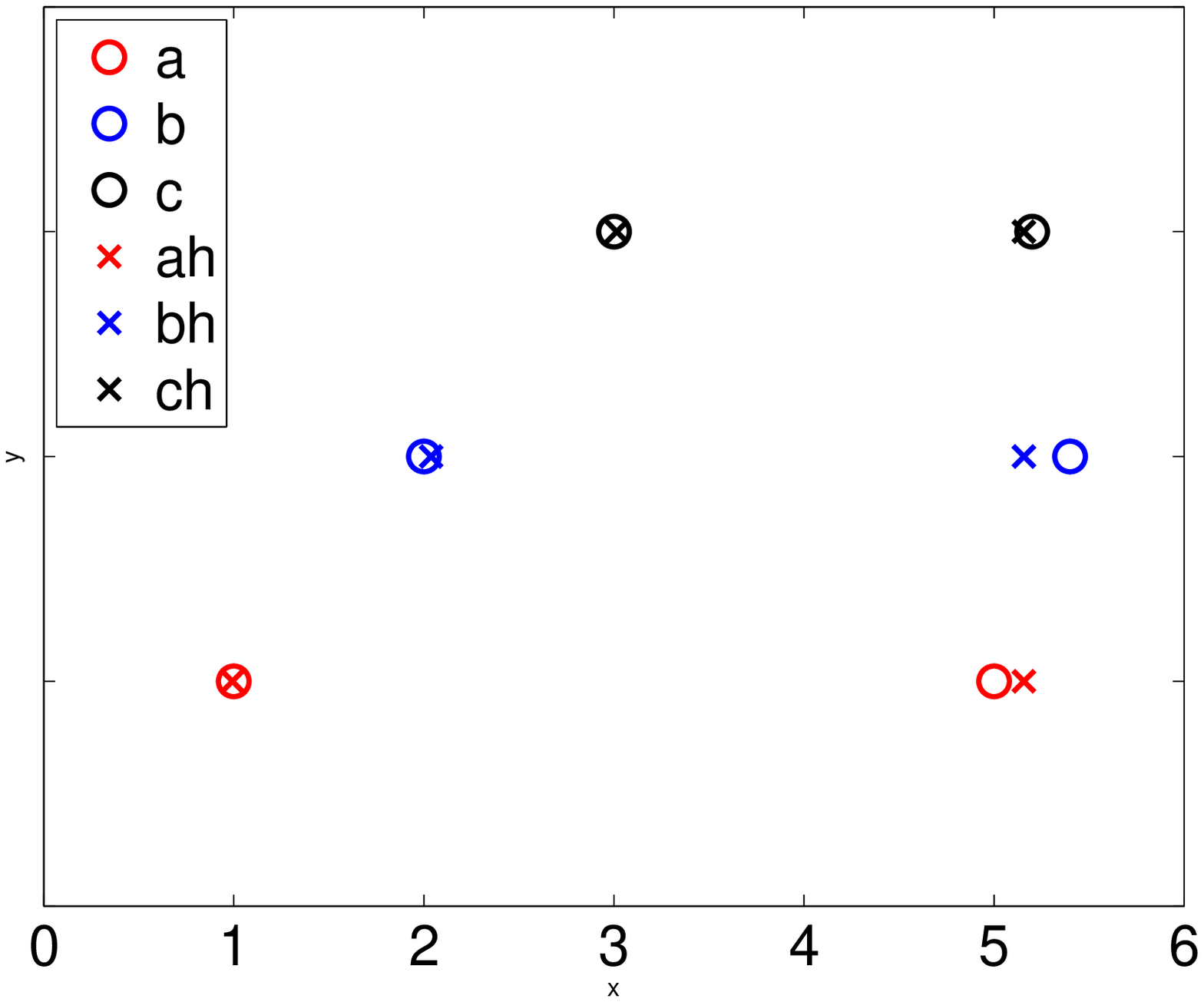}}
\subfigure[Polynomials as functions of $z.$]{
    \psfrag{a}{${a}$}
    \psfrag{b}{${b}$}
    \psfrag{c}{${c}$}
    \psfrag{ah}{${\hat a}$}
    \psfrag{bh}{${\hat b}$}
    \psfrag{ch}{${\hat c}$}
    \psfrag{x}{\raisebox{-1mm}{$z$}}
    \psfrag{y1}{\hspace*{-2mm}$a,\, b,\, c$}
    \psfrag{y2}{\hspace*{-2mm}$\hat a,\, \hat b,\, \hat c$}
    \includegraphics[width=0.52\columnwidth]{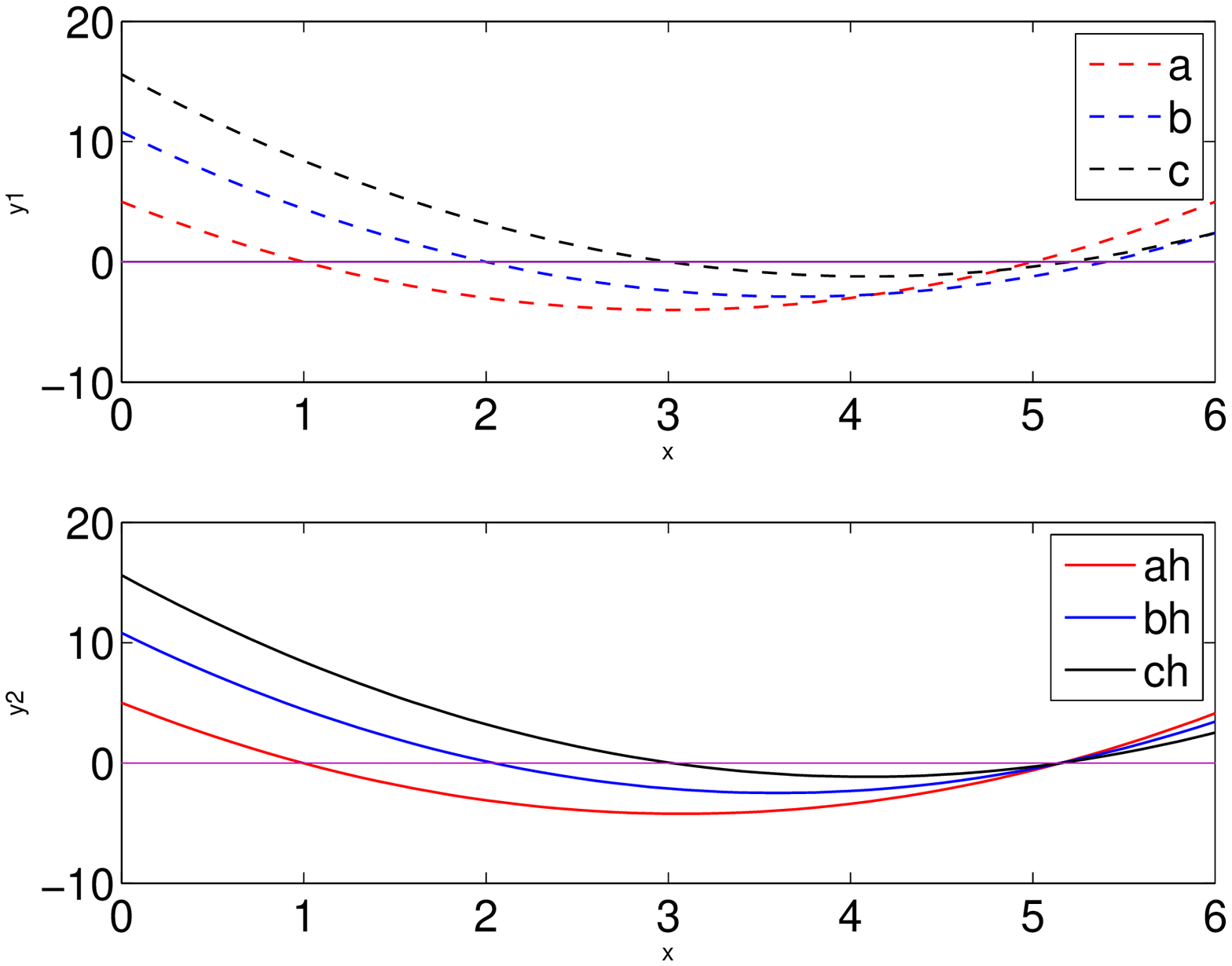}}
\caption{Results for the example of approximate common divisor of three polynomials.}%
\label{fig:gcd_v2}
\end{figure}
{The same results were obtained with {\tt slra}, applied on \eqref{eq:Sylvester3a}. 
Due to the non-convexity of the problem, Algorithm~\ref{alg:reg_slra} applied on \eqref{eq:Sylvester3a} computed a slightly different solution with comparable accuracy ($\|p-\hat p\|^ 2 = 0.0015$). Applying {\tt slra} on (\ref{eq:Sylvester4}) resulted in computing a common divisor of degree $2$, i.e., $\hat a= \hat b = \hat c$.}
Cadzow's algorithm performed well in both cases ((\ref{eq:Sylvester4}) and (\ref{eq:Sylvester3a})) and resulted in two other approximations with $\|p-\hat p\|^ 2 = 0.0014$ and $\|p-\hat p\|^ 2 = 0.0015$, respectively.

\subsection{Symmetric tensor approximation}
\label{sec:tensor}
Structured low-rank approximation can be applied to decompose and approximate complex symmetric tensors into a sum of symmetric rank-one terms.
With this application we also demonstrate how the proposed algorithm can deal with {\em missing} and {\em fixed} elements on quasi-Hankel structured matrices.

\subsubsection*{Background}
A complex tensor of dimension $n$ and order $d$, $\mathcal{T} \in \C^{\overbrace{n \times \cdots \times n}^d}$ is called symmetric if 
it is invariant under any permutation of the modes. A symmetric tensor $\mathcal{T}$ admits a symmetric decomposition of rank $r$ if it can be represented as a sum of $r$ rank-one symmetric terms:
\begin{equation}
\label{eq:symtdec}
	\mathcal{T} = \sum\limits_{k=1}^{r} \overbrace{\mathbf{v}_k \otimes \cdots \otimes \mathbf{v}_k}^{d},
\end{equation}
where $\mathbf{v}_k \in \C^{n}$. The problem of symmetric tensor decomposition is to find a (minimal) decomposition \eqref{eq:symtdec}.

In \cite{Brachat.etal10LAA-Symmetric}, it was shown that $\mathcal{T}$ has a decomposition of rank $r$ if and only if (excluding some nongeneric cases)
$$\rank {\Scal(\mathcal{T},h)} \le r,$$
where $\Scal(\mathcal{T},h)$ is a quasi-Hankel structured matrix constructed from the tensor
and the vector $h$ has unknown entries (latent variables). Therefore, the tensor decomposition problem is a low-rank matrix completion of the structured matrix $\Scal(\mathcal{T},h)$. Moreover, symmetric low-rank tensor approximation  is equivalent  to structured low-rank approximation with missing data. The main difficulty for this reformulation is that filling in missing data is nontrivial, and gives rise to multivariate polynomial systems of equations \cite{Brachat.etal10LAA-Symmetric}. 

Symmetric tensors are often represented as homogeneous polynomials \cite{Comon09conf-Tensors} using
$$
\mathcal{T} \longleftrightarrow \mathcal{T}(\mathbf{x}) := \mathcal{T} \times_{1} \mathbf{x} \times_{2} \mathbf{x} \cdots \times_{d} \mathbf{x},
$$
where $\mathbf{x} \in \C^{n}$ and `$\times_i$' is the tensor-vector product with respect to the $i$th mode of the tensor. With this representation, symmetric tensor decomposition is equivalent to the Waring problem \cite{Brachat.etal10LAA-Symmetric} of decomposing a homogeneous polynomial $\mathcal{T}(\mathbf{x})$ of degree $d$ into a sum of $d$-th powers of linear forms
$$
\mathcal{T}(\mathbf{x}) := 
\sum\limits_{k=1}^r (\mathbf{v}_k^\top \mathbf{x})^{d}.
$$ 
We will represent our results in the polynomial form.

\subsubsection*{Example}
Consider the example from \cite[\S 5.2]{Brachat.etal10LAA-Symmetric}, which is decomposition of a polynomial (a tensor of dimension $3$ and order $4$), with $\mathbf{x} = \begin{bmatrix} x_0 & x_1 & x_2 \end{bmatrix}^{\top}$,
$$
\mathcal{T}(\mathbf{x}) = 79x_0x_1^3+ 56x_0^2x_2^2 + 49x_1^2x_2^2 + 4x_0x_1x_2^2 + 57x_1x_0^3,
$$
into a sum of $6$ symmetric rank-one terms. (For dimension $3$ and degree $4$ complex symmetric tensors, the generic rank is $6$ \cite{Comon09conf-Tensors}). In this case, in order to compute the decomposition with the theory of \cite{Brachat.etal10LAA-Symmetric} it is crucial to fill in the missing data.

We considered the $10\times 10$ submatrix of the matrix in \cite[p.14]{Brachat.etal10LAA-Symmetric}. We fixed the non-missing data (by setting them in $S_0$), and computed the missing elements with Algorithm~\ref{alg:reg_slra}. The error on the deviation from the structure was around machine precision ($4.5\cdot 10^{-31}$).
The computed tensor decomposition was
\begin{equation}
\begin{array}{lrl}
	\mathcal{T}(\mathbf{x}) \approx & 
		6.94\, {\left(x_0 + 0.895\, x_1 + 0.604\, x_2\right)}^4 & \\[1mm]
	& + 6.94\, {\left(x_0 + 0.895\, x_1 - 0.604\, x_2\right)}^4 &\\[1mm]
	& - 4.94\, {\left(x_0 - 0.982\, x_1 + 0.657\, {i}\, x_2\right)}^4 &\\[1mm]
	& + 4.94\, {\left(x_0 - 0.982\, x_1  - 0.657\, {i}\, x_2\right)}^4 &\\[1mm]
	& -\left(1.99-11.9\, {i}\right){\left(x_0 + \left(0.128 + 0.308\, {i}\right)x_1\right)}^4 &\\[1mm] 
	& -\left(1.99 + 11.9\, {i}\right){\left(x_0 + \left(0.128 - 0.308\, {i}\right)x_1\right)}^4 &\hspace*{-3mm},
\end{array}
\label{eq:tensor_example}
\end{equation}
(where we have removed coefficients smaller than $10^{-12}$).
This is a different expansion from the one reported in \cite[p.15]{Brachat.etal10LAA-Symmetric}.
This can be expected, because for generic ranks the tensor decompositions are usually nonunique \cite{Comon09conf-Tensors}. The approximation error of \eqref{eq:tensor_example} on the normalized polynomial coefficients is $1.7421 \cdot 10^{-13}$. 

\begin{remark}
	Instead of the method used in \cite[p.15]{Brachat.etal10LAA-Symmetric}, we computed the vectors $\mathbf{v}_k$ by joint diagonalization of matrices $\mathbb{M}_{x_1}$ and $\mathbb{M}_{x_2}$ of the quotient algebra (see \cite{Brachat.etal10LAA-Symmetric} for the definition of these matrices). For joint diagonalization we used the method \cite{Rouquette.Najim01Estimation} from signal processing, where  approximate joint eigenvectors are computed by taking the eigenvectors of a linear combination of $\mathbb{M}_{x_1}$ and $\mathbb{M}_{x_2}$. This was needed because in the case of  multiple eigenvalues of $\mathbb{M}_{x_1}$ (which is the case of our computed decomposition), the method in \cite[p.15]{Brachat.etal10LAA-Symmetric} does not work correctly.
\end{remark}
\begin{remark}
Note that in case of (structured) matrix completion with exact data, the given data are incorporated in the matrix $S_0$. Thus, since no elements are being approximated, $\overline W = \mathbf{0}$ and the first term in the objective function \eqref{def:new}
$$\min_{P,\,L} \underbrace{\|p- \Ss^\dagger \vec{PL}\|^2_{\overline W}}_{=0} 
	+\lambda\|PL - {\cal P}_{\Scal}({PL})\|^2_F$$
vanishes.
The parameter $\lambda$ would not affect the resulting optimization problem and can also be removed.
The problem is thus reduced to the following problem
$$\min_{P,\,L}\|PL - {\cal P}_{\Scal}({PL})\|^2_F$$
and can be solved faster (since no iterations over $\lambda$ are necessary).
\end{remark}

In this example, the data were exact and the goal was to compute exact rank-$6$ decomposition. However, Algorithm~\ref{alg:reg_slra} with the full objective function \eqref{def:new} can be used to solve the more general problem of tensor low-rank approximation as well.

\section{Conclusions}
\label{sec:conclusions}
In this paper, we introduced a novel approach for solving the structure-preserving low-rank approximation problem. We used the image representation to deal with the low-rank constraint, and a penalty technique, to impose the structure on the approximation.
The original problem has been reduced to solving a series of simple least squares problems with exact solutions. We have discussed the properties of the proposed  local optimization algorithm and ensured that it can solve the weighted problem and deal with the cases of missing or fixed elements.
The proposed algorithm was tested on a set of numerical examples from system identification, computer algebra and symmetric tensor decomposition and compared favorably to existing algorithms.

The penalized structured low-rank approximation algorithm proposed in this paper is an attractive alternative to the kernel approach: it is more robust to the initial approximation (Section~\ref{ex:sysid}), allows us to use a simpler Sylvester matrix \eqref{eq:Sylvester4} in the GCD setting, and can be used for symmetric tensor decompositions, where the alternative {\tt slra} method experiences difficulties. In contrast to algorithms based on the kernel representation, the proposed structured low-rank approximation is designed for the problems requiring low ranks (small $r$). It is also worth noting that there are no restrictions on the values of the rank $r$. An efficient implementation of the algorithm and more detailed analysis of its applications in case of missing data and GCD computation are a topic of future research. 

\appendix
\section*{Appendix}

\begin{proof}[Proof of Lemma~\ref{lem:projection}.]
The closest matrix in $\image{\Scal}$ to an unstructured matrix $X$
is the solution of the minimization problem
$$\argmin{\hat X\,\in\, \image{\Scal}} \|X- \hat X\|^2_F,$$
where $\|\cdot\|_F$ stands for the Frobenius norm.
Equivalently, we need to solve
$$\min_{p\in\R^{n_p}} \|X - \Scal(p)\|^2_F,$$
which can be written as
\begin{equation}
\min_{p\in\R^{n_p}} \|\vec{X} - \vec{S_0} - \Ss p\|^2_2.
\label{min:projection}
\end{equation}
Since, by assumption {\bf (A)}, $\Ss$ has full column rank and $\Ss^\top \vec{S_0} =\mathbf{0},$
(\ref{min:projection}) is a least squares problem with unique solution 
$$p^\ast = (\Ss^\top \,\Ss)^{-1}\Ss^\top \vec{X-S_0}=(\Ss^\top \,\Ss)^{-1}\Ss^\top \vec{X} = \Ss^\dagger \vec{X}.$$  Thus,
$${\cal P}_{\Scal}(X) = \Scal(\Ss^\dagger \vec{X}),$$
which completes the proof.\hfill$ $
\end{proof}

\begin{remark}[Interpretation of the orthogonal projection]
The effect of applying $\Ss^\dagger$ on a vectorized $m\times n$ matrix $X$ is producing a  structure parameter vector 
by averaging the elements of $X,$ corresponding to the same $S_k$.  
Indeed, the product $\Ss^\top\vec{X}$ results in a vector containing the sums of the elements corresponding to each~$S_k$.
By assumption {\bf(A)}, 
$\Ss^\top \,\Ss$ is a diagonal matrix, with elements on the diagonal equal to the number of nonzero elements in each $S_k$, i.e., 
$$\Ss^\top \,\Ss=\begin{bmatrix}\|S_1\|^2_F & & \mathbf{0}\\ & \ddots &\\ \mathbf{0} & & \|S_{n_p}\|^2_F\\ \end{bmatrix}
=\begin{bmatrix}\textnormal{nnz}(S_1) & &  \mathbf{0}\\ & \ddots &\\  \mathbf{0}& & \textnormal{nnz}(S_{n_p})\\ \end{bmatrix},$$
where \textnormal{nnz} stands for the number of nonzero elements.
Therefore multiplying by $(\Ss^\top \,\Ss)^{-1}\Ss^\top$ corresponds to averaging.

In particular, applying $\Ss^\dagger$ on a (vectorized) structured matrix extracts its structure parameter vector, since
$$\Ss^\dagger\, \vec{\Scal(p)} = (\Ss^\top \,\Ss)^{-1}\Ss^\top \,\Ss\,p = p.$$ 
\end{remark}

\vspace*{1mm}
\begin{proof}[Proof of Lemma~\ref{lem:LS_reformulation}.]
Using the following well-known equality
\begin{equation*}
\vec{XYZ}= (Z^\top\otimes X)\,\vec{Y},
\label{eq:vec}
\end{equation*}
we have
\begin{equation}
\vec{PL} = (I_n\otimes P)\,\vec{L} = (L^\top \otimes I_m)\,\vec{P}.
\label{eq:vec_PL}
\end{equation}

Consider first problem (\ref{def:subproblem1}). Problem (\ref{def:subproblem2}) can be solved in a similar way.
Using \eqref{eq:vec_proj} and \eqref{eq:vec_PL}, (\ref{def:subproblem1}) can be reformulated as
\begin{equation*}
\begin{array}{ll}
	& \displaystyle{\min_{L}} \|p- \Ss^\dagger\, \vec{PL}\|^2_{\overline W} +
		\lambda\|PL - {\cal P}_{\Scal}(PL)\|^2_F\\[3mm]
	\Longleftrightarrow\quad\mbox{ } & \displaystyle{\min_{L} \|\overline M(p-\Ss^\dagger\,\vec{PL})\|^2_2 +
		\lambda\|\vec{PL} - \vec{{\cal P}_{\Scal}({PL})}\|^2_2,}\\[3mm]
	\Longleftrightarrow & \displaystyle{\min_{L} \|\overline M (p-\Ss^\dagger\,\vec{PL})\|^2_2
		+\lambda\|\vec{PL} -  \vec{S_0} - \Pi_{\Ss}\,\vec{PL}\|^2_2,}\\[3mm]
	\Longleftrightarrow & \displaystyle{\min_{L} \|\overline M\Ss^\dagger\,\vec{PL} -\overline Mp\|^2_2 
		+\|\sqrt{\lambda}\Pi_{\Ss_\perp}\,\vec{PL} 
		-\sqrt{\lambda}\vec{S_0}\|^2_2,}\\[3mm]
	\Longleftrightarrow & \displaystyle{\min_{L} \left\|
		\begin{bmatrix}\overline M\,\Ss^\dagger\\[2mm]\sqrt{\lambda}\Pi_{\Ss_\perp}\end{bmatrix}\,\vec{PL}
		-\begin{bmatrix}\overline Mp\\[2mm]\sqrt{\lambda}\vec{S_0}\end{bmatrix}\right\|^2_2,}\\[6mm]		
	\Longleftrightarrow & \displaystyle{\min_{L} \left\|
			\begin{bmatrix} \overline M\, \Ss^\dagger\\[2mm]\sqrt{\lambda}\Pi_{\Ss_\perp}\end{bmatrix}\,(I_n\otimes P)\,\vec{L}
		-\begin{bmatrix} \overline M p\\[2mm]\sqrt{\lambda}\vec{S_0}\end{bmatrix}\right\|^2_2.}
\end{array}
\label{min2_}
\end{equation*}

The derivation for $P$ is analogous.\hfill
\end{proof}

\begin{proof}[Proof of Proposition~\ref{prop:feasibility}.]
We need to prove that
\[
\calJ^{\top}_c(P,L)\, c(P,L) = 0\quad  \Longrightarrow \quad c(P,L) = 0.
\]
Recall from \eqref{eq:constraint2} and \eqref{eq:constraint_gradients} that
\[
\begin{split}
c(P,L)
& = \Pi_{\bfS_\bot} \mvec(PL) 
= \Pi_{\bfS_\bot} (L^{\top} \otimes I_m) \mvec(P)
= \Pi_{\bfS_\bot} (I_n \otimes P) \mvec(L),\\[1mm]
\calJ_c(P,L) & = \Pi_{\bfS_\bot} \bmx L^{\top} \otimes I_m  & I_n \otimes P \emx.
\end{split}
\]
The expression $\calJ^{\top}_c(P,L)\, c(P,L) = 0$ is  then equivalent to the system of equations
\begin{equation}\label{eq:system}
\begin{cases}
&(L \otimes I_m)  \Pi_{\bfS_\bot} \Pi_{\bfS_\bot}  (L^{\top} \otimes I_m) \mvec(P) = 0,\\[1mm]
&(I_n \otimes P^{\top})\,  \Pi_{\bfS_\bot} \Pi_{\bfS_\bot} (I_n \otimes P) \mvec(L) = 0.
\end{cases}
\end{equation}
If we denote $A := \Pi_{\bfS_\bot} (L^{\top} \otimes I_m)$, $B := \Pi_{\bfS_\bot} (I_n\otimes P)$,
\eqref{eq:system} is equivalent to
\[\hspace*{-1mm}
\begin{cases}
&\!\!\!A^{\top}A\mvec(P) = 0,\\
&\!\!\!B^{\top}B\mvec(L) = 0.
\end{cases}
\iff 
\begin{cases}
&\!\!\!A\mvec(P) = 0,\\
&\!\!\!B\mvec(L) = 0.
\end{cases}
\iff
\Pi_{\bfS_\bot} \mvec(PL) = 0
\iff c(P,L) = 0,
\]
which completes the proof.\hfill
\end{proof}

\begin{proof}[Proof of Lemma~\ref{lem:rank_2nd_matrix}.]
	A vector $\mbox{vec}^\top(X)$, with $X\in\R^{m\times n}$, is in the left kernel of 
	$\begin{bmatrix}L^\top \otimes I_m & I_n\otimes P\end{bmatrix}$ when
	\[\begin{array}{rcl}
		\hspace*{-2mm}\mbox{vec}^\top\!(X) \bmx L^\top \otimes I_m & I_n\otimes P\emx = \mathbf{0}
		& \iff & 	\mbox{vec}^\top\!(X) (L^\top \otimes I_m) = \mathbf{0} \quad\mbox{and }\\[1mm]
		& 		& \mbox{vec}^\top\!(X)(I_n\otimes P) = \mathbf{0}\\[1mm]
		& \iff & XL^\top = \mathbf{0} \mbox{ and } P^\top X = \mathbf{0} \\[1mm]
		& \iff & X = P_{\bot} Y L_{\bot}, \mbox{ for some } Y \\[1mm]
		& \iff & \vec{X} = (L_{\bot}^{\top} \otimes P_{\bot}) \vec{Y}  \\[1mm]
		& \iff & X \in\mathop{\mathrm{image}} (L_{\bot}^{\top} \otimes P_{\bot}),
	\end{array}
	\]
	where 
	we have used the well-known equality $\vec{XYZ}= (Z^\top\otimes X)\,\vec{Y}$,\linebreak
	$P_{\bot} \in \bbR^{m\times (m-r_P)}$ and  $L_{\bot} \in \bbR^{(n-r_L) \times n}$ are orthogonal complements of $P$ and $L$, respectively, and
$\rank{P} = r_P$ and $\rank{L} = r_L.$ Then,
	\begin{equation*}
	\begin{array}{rcl}
	\rrank (\begin{bmatrix}L^\top \otimes I_m & I_n\otimes P\end{bmatrix}) 
		& = & mn -\dim (L_{\bot}^{\top} \otimes P_{\bot})\\[1mm]
		& = & mn - (m-r_P)(n-r_L)\\[1mm]
		& \leq & mr+nr-r^2.
	\end{array}
	\label{eq:rank_2nd_matrix}
	\end{equation*}
	The equality holds if and only if $r_P = r_L =r$.\hfill
\end{proof}

\begin{proof}[Proof of Lemma~\ref{prop:prop2}]
\quad
\begin{enumerate}
\item Since $\rrank (\mathcal{J}_{\widetilde{c}} (P_*,L_*)) = mr+nr-r^2$, it follows from Lemma~4.4 that the row span of $\mathcal{J}_{\widetilde{c}} (P_*,L_*)$ coincides with the row span of $\bmx L^{\top}_* \otimes I_{m} &  I_{n} \otimes P_* \emx$. We note that the cost function  $f(P,L) = \|D-PL\|^2_W$ can be expressed as $f(P,L) = g(\vec{PL})$, and thus its gradient  at the limit point can be expressed as 
\[
\nabla f(P_*,L_*) = \bmx L^{\top}_* \otimes I_{m} & I_{n} \otimes P_* \emx^{\top} \nabla g( \vec{P_*,L_*})
\]
and is also in the row span of $\bmx L^{\top}_* \otimes I_{m} &  I_{n} \otimes P_* \emx$. Thus the equation
\[
\mathcal{J}^{\top}_{\widetilde{c}} (P_*,L_*) \nu = \nabla f(P_*,L_*)
\]
is consistent and its set of solutions is an affine subspace of dimension
\[
(mn-n_p) - (mr+nr-r^2)  = (m-r)(n-r) -n_p.
\]
\item Since $\rrank (\bmx L^{\top}_* \otimes I_{m} &  I_{n} \otimes P_* \emx) = mr+nr-r^2$, from Lemma~4.4 we have that $\rrank (P_*) = \rrank (L_*) = r$.  Since $\mathcal{J}_{\widetilde{c}} (P,L)$ depends continuously on $(P,L)$, then
$\rrank \mathcal{J}_{\widetilde{c}} (P,L) = mr+nr-r^2$ in a neighborhood of $(P_*,L_*)$.
\end{enumerate}
\end{proof}


\end{document}